\newtheorem{theorem}{Theorem}
\newtheorem{lemma}[theorem]{Lemma}
\newtheorem{corollary}[theorem]{Corollary}
\newtheorem{application}{Application}
\newcommand{\naturals}{\mathds{N}}
\newcommand{\reals}{\mathds{R}}
\newcommand{\bigO}{\mathcal{O}}
\newcommand{\smallo}{o}
\newcommand{\transp}[1]{#1^\intercal}
\newcommand{\trace}{\operatorname{Tr}}
\newcommand{\Id}{I}
\newcommand{\vect}[1]{\vec{#1}}
\newcommand{\gvect}[1]{\overset{{}_{\shortleftarrow}}{#1}}
\newcommand{\diag}[1]{\hat{#1}} 
\newcommand{\adj}{\operatorname{adj}}
\newcommand{\smat}[1]{\left( \begin{smallmatrix} #1 \end{smallmatrix} \right)}
\newcommand{\mn}{c}
\newcommand{\GRnm}{G_{R, \vect{r}}(n,m)}
\newcommand{\SGRnm}{\mathit{SG}_{R, \vect{r}}(n,m)}
\newcommand{\nmR}{$(n,m)$-$(R, \vect{r})$}
\newcommand{\Rr}{$(R,\vect{r})$}
\newcommand{\ff}{\Phi}
\newcommand{\gradf}{\gvect{\scriptstyle{\nabla}}_{\ff_{\mn}(\vphi)}}
\newcommand{\gradfx}{\gvect{\scriptstyle{\nabla}}_{\ff_{\mn}(\vx)}}
\newcommand{\Hf}{\mathcal{H}_{\ff_{\mn}(\vphi_{\mn})}}
\newcommand{\Hfx}{\mathcal{H}_{\ff_{\mn}(\vx)}}
\newcommand{\vphi}{\vect{\varphi}}
\newcommand{\vepsilon}{\vect{\epsilon}}
\newcommand{\vone}{\vect{\scriptstyle{1}}}
\newcommand{\vzero}{\vect{\scriptstyle{0}}}
\newcommand{\vtau}{\vect{\tau}}
\newcommand{\vgamma}{\vect{\gamma}}
\newcommand{\vT}{\vect{T}}
\newcommand{\vx}{\vect{x}}
\newcommand{\gvphi}{\gvect{\varphi}}
\newcommand{\gvepsilon}{\gvect{\epsilon}}
\newcommand{\gvone}{\gvect{\scriptstyle{1}}}
\newcommand{\gvzero}{\gvect{\scriptstyle{0}}}
\newcommand{\dphi}{\diag{\varphi}}
\newcommand{\gvtau}{\gvect{\tau}}
\newcommand{\gvgamma}{\gvect{\gamma}}
\newcommand{\gvT}{\gvect{T}}
\newcommand{\gvx}{\gvect{x}}
\newcommand{\dtau}{\diag{\tau}}
\newcommand{\dgamma}{\diag{\gamma}}
\newcommand{\dT}{\diag{T}}
\newcommand{\dx}{\diag{x}}
\newcommand{\weight}{\omega}
\newcommand{\edge}[1]{\overline{#1}}
\newcommand{\seqv}{\operatorname{seqv}}
\newcommand{\Edge}{\operatorname{edge}}
\newcommand{\UVnm}{G^{(U, V)}_{R, \vect{r}}(n,m)}
\newcommand{\onex}{\mathcal{S}}
\newcommand{\SV}{\mathit{SV}}
\newcommand{\vy}{\vect{y}}
\newcommand{\gvy}{\gvect{y}}
\newcommand{\Rcol}{R^{\operatorname{(col)}}}
\newcommand{\Rfriend}{R^{\operatorname{(fs)}}}
\begin{document}

\title{Enumeration and structure of inhomogeneous graphs}

\author{\'Elie de Panafieu}
\address{Research Institute for Symbolic Computation (RISC) \\
Johannes Kepler Universit\"at \\
Altenbergerstra\ss{}e 69 \\
A-4040 Linz, Austria}
\thanks{This work was partially founded by the ANR Magnum,
the Austrian Science Fund (FWF) grant F5004.}

\begin{abstract}
We analyze a general model
of weighted graphs,
introduced by de Panafieu and Ravelomanana (2014)
and similar to the \emph{inhomogeneous graph model} 
of S\"oderberg (2002).
Each vertex receives a \emph{type} 
among a set of $q$ possibilities
as well as a \emph{weight} corresponding to this type,
and each edge is weighted 
according to the types of the vertices it links.
The weight of the graph is then the product
of the weights of its vertices and edges.
We investigate the sum of the weights of such graphs
and prove that when the number of edges is small,
almost all of them contain no component with more than one cycle.
Those results allow us to give a new proof
in a more general setting
of a theorem of Wright (1961)
on the enumeration of properly colored graphs.
We also discuss applications related
to social networks.

\textbf{Keywords.}
generating functions, analytic combinatorics, multivariate Laplace method, inhomogeneous graphs, graph coloring.
\end{abstract}

\maketitle

The success of graphs relies on two contradictory properties.
They are simple enough to appear naturally in many applications,
but at the same time rich enough
to enjoy non-trivial mathematical properties.
Extensions such as hypergraphs and digraphs,
and various notions of randomness on those objects
have been proposed to address a wider range of applications.
In this article, we focus on a model of inhomogeneous graphs
introduced by \cite{PR14} 
and similar to the \emph{inhomogeneous graph model}
of \cite{S02}.
Our main tool is analytic combinatorics,
and this work has been influenced by the articles
of \cite{FKP89} 
and \cite{JKLP93}.

In Section~\ref{sec:model}, we present the model
and two applications.
The first one is a new proof of a result of \cite{W72}
on the enumeration of properly colored graphs,
the second is an enumeration of graphs
where each vertex receives a number of attributes
and two vertices can only be linked by an edge
if they share at least one common attribute.
Section~\ref{sec:global} provides theorems for
the enumeration of inhomogeneous graphs
with a given number of vertices~$n$ and edges~$m$.
The set of inhomogeneous graphs
that contain no component with more than one cycle
is analyzed in Section~\ref{sec:UV}.
We prove in Section~\ref{sec:unique_min}
that when $\frac{m}{n}$ is small enough,
almost all inhomogeneous graphs belong to this set,
and derive more explicit results than in Section~\ref{sec:global}
on the global enumeration of inhomogeneous graphs.
Section~\ref{sec:simple} extends the previous results
to inhomogeneous graphs without loops nor multiple edges.

    \section{Notations, models and applications} \label{sec:model}

    \subsection{Notations}

The row vector $(u_1, \ldots, u_q)$ is denoted by $\gvect{u}$,
and the column vector $\transp{(u_1, \ldots, u_q)}$ by $\vect{u}$.
The diagonal matrix $\diag{u}$ has main diagonal $\vect{u}$,
and $\vone$ is the vector with all coefficients equal to $1$.
%
We adopt the notation $\vect{u}^{\, \vect{v}}$ for the product $\prod_i u_i^{v_i}$.
The functions $\log$ and $\exp$ are applied coefficient-wise to the vectors,
\textit{i.e.} $\log(\gvect{u}) = ( \log(u_1), \ldots, \log(u_q) )$.
When $\sum_i n_i = n$, the multinomial notation $\binom{n}{\vect{n}}$ 
denotes $n! / \prod_i n_i !$.
The adjugate of a matrix $M$, equal to the transpose of the cofactor matrix, is $\adj(M)$.
Open intervals, closed intervals and integer intervals 
are denoted by $]x,y[$, $[x,y]$ and $[a..b]$.

    \subsection{Graph model}

The \emph{uniform graph model},
also called \emph{multigraph process}, 
has been studied using analytic combinatorics 
by \cite{FKP89} and \cite{JKLP93}.
This model produces a random 
vertex-labelled graph 
with $n$ vertices and $m$ edges
by drawing $2m$ vertices
$v_1 w_1 \ldots v_m w_m$
uniformly independently in $[1..n]$,
and adding to the graph the edges $\edge{v_i w_i}$
for $i$ from $1$ to $m$: 
\[
  \Edge(G) = \{ \edge{v_i w_i}\ |\ 1 \leq i \leq m\}.
\]
The graph is \emph{simple} if it contains neither loops nor multiple edges.
If the output of the process is conditioned to be simple, the model reduces 
to the classic $G(n,m)$ graph model of Erd\H{o}s and R\'enyi.
The number of ordered \emph{sequences of vertices} $v_1 w_1 \ldots v_m w_m$
that correspond to a graph $G$ is denoted by $\seqv(G)$
\[
  \seqv(G) = |
  \{ v_1 w_1 \ldots v_m w_m\ |\ 
    \{ \edge{v_i w_i}\ |\ 1 \leq i \leq m\} = \Edge(G) \} |.
\]
Observe that a graph~$G$ with $m$ edges is simple
if and only if 
its number of sequences of vertices $\seqv(G)$
is equal to $2^m m!$.
For this reason, \cite{JKLP93} introduced
the \emph{compensation factor} 
\[
  \kappa(G) = \frac{\seqv(G)}{2^m m!}.
\]
%
The \emph{number of graphs} in a family
is defined as the sum of their compensation factors, 
although this quantity needs not be an integer.
However, when the graphs are simple, 
the number of graphs is equal 
to the actual cardinality of the family.
For example, the total number of multigraphs
with $n$ vertices and $m$ edges is $\frac{n^{2m}}{2^m m!}$.

    \subsection{Inhomogeneous graph model}

The original \emph{inhomogeneous graph model} was introduced by \cite{S02}
as a generalization of the classic $G(n,p)$ random graph model,
and extended by \cite{BJR07}.
In this model, each vertex receives a \emph{type},
which is an integer in $[1..q]$,
and the probability that a vertex
of type~$i$ and one of type~$j$
are linked is the coefficient~$(i,j)$
of a symmetric $q \times q$ matrix $R$.
We consider in this paper a variant of this model,
introduced by \cite{PR14} and
closer to the uniform graph model.
Its parameters are an irreducible symmetric $q \times q$ matrix $R$
and a vector $\vect{r}$ of size $q$, 
both with non-negative coefficients.
We call \emph{inhomogeneous graph}, or \emph{\Rr-graph}, 
a labelled graph where
(loops and multiple edges are allowed) where
\begin{itemize}
\item
each vertex $v$ has a \emph{type} $t(v)$
which is an integer in $[1..q]$
and a \emph{weight} $r_{t(v)}$,
\item
each edge $\edge{vw}$ receives a weight $R_{t(v), t(w)}$.
\end{itemize}
The \emph{weight} $\omega(G)$ of an \Rr-graph $G$ 
is the product of
the compensation factor of the underlying graph
(which is equal to $1$ if the graph is simple),
the weights of the vertices
and the weights of the edges
\[
  \omega(G) = 
  \kappa(G) 
  \prod_{u \in G} r_{t(u)} 
  \prod_{\edge{vw} \in G} R_{t(v), t(w)}.
\]
One can also think of the parameters $(r_i)$ and $(R_{i,j})$
as variables marking the vertices and the edges
according to their types and the types of their ends.
We define the \emph{number of \Rr-graphs} in a family
as the sum of their weights.
This convention will be justified by the applications.
An \emph{\nmR-graph} is an \Rr-graph
with $n$ vertices and $m$ edges.
Let $n(G)$ denote the number 
of vertices of a graph $G$ 
and $\mathcal{F}$ be a family of \Rr-graphs,
then the \emph{generating function} $F(z)$ of $\mathcal{F}$
is defined by
\[
  F(z) = \sum_{G \in \mathcal{F}} \omega(G) \frac{z^{n(G)}}{n(G)!}.
\]
Observe that an \Rr-graph that contains 
an edge of weight zero
has weight zero, and thus
does not contribute to the number of \Rr-graphs.
The next lemma justifies our assumption for $R$ to be irreducible.

\begin{lemma}
  Let $G_{R,\vect{r}}^{*}$ denote the set of \Rr-graphs
  with non-zero weight.
  If the matrix $R$ is reducible,
  then there exist 
  a non-trivial partition $T_1 \uplus \cdots \uplus T_k = [1..q]$
  of the set of types,
  symmetric irreducible matrices $S_1, \ldots, S_k$
  and vectors $\vect{s}_1, \ldots, \vect{s}_k$
  such that $G_{R,\vect{r}}^{*}$ is in bijection 
  with the Cartesian product $G_{S_1,\vect{s}_1}^{*} \times \cdots \times G_{S_s,\vect{s}_k}^{*}$.
  Specifically, for any graph $G$ in $G_{R,\vect{r}}^{*}$\ , 
  \begin{itemize}
  \item
  for all $i \neq j$, there is no edge between 
  a vertex of type in $T_i$ and one of type in $T_j$,
  \item
  for all $i$, the graph induced by $G$
  on the vertices with types in $T_i$ 
  is in $G_{S_i,\vect{s}_i}^{*}$.
  \end{itemize}
\end{lemma}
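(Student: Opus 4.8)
The plan is to unpack what reducibility of the symmetric matrix $R$ means combinatorially, translate it into a statement about which edges can appear in a nonzero-weight graph, and then read off the claimed bijection almost mechanically.

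First I would recall that a symmetric matrix $R$ is reducible precisely when, after a simultaneous permutation of rows and columns, it is block-diagonal with at least two blocks; equivalently, the index set $[1..q]$ admits a non-trivial partition $T_1 \uplus \cdots \uplus T_k$ such that $R_{i,j} = 0$ whenever $i$ and $j$ lie in different parts. (One obtains the finest such partition by taking the connected components of the graph on $[1..q]$ with an edge $\{i,j\}$ whenever $R_{i,j} \neq 0$.) For each part $T_\ell$ define $S_\ell$ to be the principal submatrix of $R$ indexed by $T_\ell$ and $\vect{s}_\ell$ the corresponding subvector of $\vect{r}$; then $S_\ell$ is symmetric by symmetry of $R$, and it is irreducible because $T_\ell$ was chosen to be a single connected component. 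This gives the partition, the matrices and the vectors demanded by the statement.

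Next I would establish the two bulleted claims. Take any $G \in G_{R,\vect{r}}^{*}$, i.e. $\omega(G) \neq 0$. Since $\omega(G)$ is a product over the edges of $G$ of the factors $R_{t(v),t(w)}$ (together with the vertex weights and the compensation factor, which are nonzero — the latter because $\seqv(G) \geq 1$ always), each edge factor must itself be nonzero. Hence for every edge $\edge{vw}$ of $G$ we have $R_{t(v),t(w)} \neq 0$, which by construction forces $t(v)$ and $t(w)$ to lie in the same part $T_\ell$; this is exactly the first bullet. Consequently the vertex set of $G$ splits, according to the part containing each vertex's type, into blocks with no edges between them, so $G$ is the disjoint union of the induced subgraphs $G[\{u : t(u) \in T_\ell\}]$. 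Each such induced subgraph has all vertex types in $T_\ell$, all edge weights among the nonzero entries of $S_\ell$, and — since it is a union of whole connected components of $G$ — the same compensation factor contribution as it has inside $G$; therefore it lies in $G_{S_\ell,\vect{s}_\ell}^{*}$, giving the second bullet.

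Finally, the map $G \mapsto (G_1, \ldots, G_k)$ sending a graph to its tuple of induced pieces is a bijection onto $G_{S_1,\vect{s}_1}^{*} \times \cdots \times G_{S_k,\vect{s}_k}^{*}$: its inverse takes a tuple to the disjoint union of the pieces, which again has nonzero weight because weights multiply over connected components and no new cross-edges are created. The only point needing a little care — and the one I would treat most explicitly — is the bookkeeping of vertex labels and of the compensation factor $\kappa$ under decomposition into connected components: one must check that $\seqv$ (hence $\kappa$) is multiplicative over the connected components of a graph, so that $\omega(G) = \prod_\ell \omega(G_\ell)$ exactly, and that labels can be consistently distributed and recombined. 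This is where I expect the main (though still routine) obstacle to lie; everything else follows directly from the factored form of $\omega$.
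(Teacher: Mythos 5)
Your proof is correct and follows essentially the same route as the paper's: reducibility of $R$ yields a block-diagonal decomposition of the set of types, any edge between different parts would carry weight $0$ and is therefore excluded from a graph of non-zero weight, and the graph splits into induced pieces lying in the corresponding $G_{S_i,\vect{s}_i}^{*}$. You fill in details the paper leaves implicit (irreducibility of the blocks via the connected components of the support of $R$, and multiplicativity of the compensation factor $\kappa$ over disjoint unions), but the underlying argument is the same.
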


\begin{proof}
  Let $(\vect{e}_1, \ldots, \vect{e}_q)$ denote the canonical basis of $\reals^q$.
  Since $R$ is reducible, there is a partition $T_1, \ldots, T_k$ of $[1..q]$
  such that the matrix of $R$ on the basis
  \[
    ( \vect{e}_{T_1[1]}, \vect{e}_{T_1[2]}, \ldots, \vect{e}_{T_2[1]}, \vect{e}_{T_2[2]}, \ldots )
  \]
  has a block-diagonal shape $\operatorname{diag}(S_1, \ldots, S_k)$.
  For each $i$, we set $\vect{s}_i = (r_{T_i[1]}, r_{T_i[2]}, \ldots)$.
  There can be no edge between types in $T_i$ and in $T_j$ for $i \neq j$
  because its weight would be $0$.
  Therefore, any component of $G_{R,\vect{r}}^{*}$
  with a vertex of type in $T_i$ has all its types in $T_i$.
  By construction, such a component is in $G_{S_i,\vect{s}_i}^{*}$.
\end{proof}

In the following, the matrix $R$ is therefore
always assumed to be irreducible.
In this paper, we analyze asymptotic properties
of \nmR-graphs when $n$ goes to infinity, $m$ is equal to $\mn n$,
and $R$, $\vect{r}$ and $\mn$ are fixed.
We look forward to applications that would require
to relax those conditions, to guide us 
toward the generalization of the model.

    \subsection{Applications}

Inhomogeneous graphs have been used in~\cite{PR14}
to analyze the phase transition of satisfiability problems.
We present two new applications.

In the \emph{properly $q$-colored} graphs,
each vertex has a color in $[1..q]$
and no edge links two vertices with the same one.
We give a new proof of \cite[Theorem~$3$]{W72} on their enumeration.
This result is not to be confused with 
an enumeration of \emph{$q$-colorable} graphs,
a problem expected to be much more difficult 
and addressed by \cite{AC08}.

\begin{application}
  Let $\Rcol$ denote the $q \times q$ matrix 
  with all coefficients equal to $1$,
  except on the diagonal where they are equal to $0$.
  Then the number of properly $q$-colored $(n,m)$-graphs
  is equal to the number of $(n,m)$-$(\Rcol,\vone)$-graphs.
  When $\frac{m}{n}$ is in a compact interval of $\reals_{>0}$, its asymptotics  is
  \[
    \frac{n^{2m}}{2^m m!}
    \left( 1 + \frac{2}{q-1} \frac{m}{n} \right)^{-\frac{q-1}{2}}
    \left( 1 - \frac{1}{q} \right)^m
    q^n
    + \smallo(1).
  \]
  For properly $q$-colored simple graphs, the previous asymptotics
  is replaced by 
  \[
    \frac{n^{2m}}{2^m m!}
    \left( 1 + \frac{2}{q-1} \frac{m}{n} \right)^{-\frac{q-1}{2}}
    \left( 1 - \frac{1}{q} \right)^m
    q^n
    \exp \big( \left( \frac{m}{n} \right)^2 \frac{q}{q-1} \big)
    + \smallo(1).
  \]
\end{application}

\begin{proof}
  Let us identify the types and the colors.
  If an $(\Rcol, \vone)$-graph is properly colored, 
  the product of the weights of its edges is $1$,
  otherwise it is $0$,
  hence the first assertion of the theorem.
  The eigenvalues of $\Rcol$ are $q-1$ and $-1$,
  with multiplicities $1$ and $q-1$.
  The asymptotics is then a direct application
  of Theorem~\ref{th:eigenvectorone}
  and the remark following Theorem~\ref{th:simple}.
\end{proof}

As a second example, we consider a world 
where each person has a number $k$ of topics of interest
among a set of size $t \geq 2k$,
and where two people can only become friend 
if they share at least one common topic of interest.
We call \emph{friendship graph}
the graph where each vertex represents a person
and each edge a friendship relation.
The graph is naturally assumed to be simple.
To analyze the friendship graphs, we introduce the following notations.
Let $\sigma$ denote a numbering of the subsets of size $k$ of $[1..t]$,
and $\Rfriend$ the adjacency matrix 
of the complement of the \emph{Kneser graph}
(see, for example, \cite{GR01}). 
This matrix of dimension $q = \binom{t}{k}$ is defined by
\[
  \Rfriend_{i,j} = 
  \begin{cases}
  1 & \text{if } | \sigma(i) \cap \sigma(j) | \geq 1,\\
  0 & \text{otherwise}.
  \end{cases}
\]

\begin{application}
The number of friendship graphs with $n$ people
and $m$ friendship relations is equal 
to the number of simple $(n,m)$-$(\Rfriend, \vone)$-graphs.
When $\frac{m}{n}$ is in a closed interval of $]0,\frac{1}{2}[$,
almost all friendship graphs contain no component with more than one cycle.
There is a value $\beta > \frac{1}{2}$ such that
when $\frac{m}{n}$ is in a compact interval of $]0, \beta[$,
the asymptotics of friendship graphs is
\[
  \frac{n^{2m}}{2^m m!}
  \left(
    \binom{t}{k} - \binom{t-k}{k}
  \right)^{m}
  \binom{t}{k}^{n-m}
  C
  + \smallo(1)
\]
where the value $C$, bounded with respect to $n$, is
\[
  C =
  \exp \left(
    - \frac{ \binom{t}{k} }
      { \binom{t}{k} - \binom{t-k}{k} }
    \frac{m}{n}
    \left( 1 + \frac{m}{n} \right)
  \right)
  \prod_{j=1}^k
  \left(
    1 
    - (-1)^j
    \frac{2 m}{n}
    \frac{ \binom{t-k-j}{k-j} }
      { \binom{t}{k} - \binom{t-k}{k} }
  \right)^{- \frac{1}{2} \left( \binom{t}{j} - \binom{t}{j-1} \right)}.
\]
\end{application}

\begin{proof}
Identifying each type $i$
with the set of topics of interest $\sigma(i)$,
the definition of the matrix $\Rfriend$
implies that the weight of a simple $(\Rfriend,\vone)$-graph
is $1$ if it is a friendship graph, and $0$ otherwise.
The spectrum of the Kneser graph is known, and available in \cite{DL98}.
The spectrum of its complement follows:
the dominant eigenvalue is $\binom{t}{k} - \binom{t-k}{k}$
and for all $j$ from $1$ to $k$, $(-1)^j \binom{t-k-j}{k-j}$
is an eigenvalue with multiplicity $\binom{t}{j} - \binom{t}{j-1}$.
The result is then a consequence of Theorem~\ref{th:eigenvectorone}
and the remark that follows Theorem~\ref{th:simple},
with parameters $\trace(\Rfriend) = \binom{t}{k}$
and $\trace((\Rfriend)^2) = \binom{t}{k} - \binom{t-k}{k}$.
\end{proof}

Inhomogeneous graphs can as well handle generalizations of the model.
For example, the weight of a friendship could be a real value, 
function of the number of common topics of interest.
The parameters of the asymptotics, although still computable,
will become less explicit.

    \section{Global enumeration} \label{sec:global}

In this section, we reduce the problem of deriving the asymptotics
of \nmR-graphs to the location of the minimums of a function
parameterized by $R$, $\vect{r}$ and $\frac{m}{n}$.
We start with an explicit formula.

\begin{theorem} \label{th:exact}
The number of \nmR-graphs is
\begin{equation} \label{eq:Gexact}
  \GRnm =
  \frac{1}{2^m m!}
  \sum_{\{ \vect{n} \in \naturals^q\ |\ \gvect{1} \vect{n} = n \}}
  \binom{n}{\vect{n}}
  \vect{r}^{\ \vect{n}} 
  \left( \gvect{n} R \vect{n} \right)^m.
\end{equation}
\end{theorem}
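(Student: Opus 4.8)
The plan is to unfold the definition of the weight and regroup the sum according to the sequence-of-vertices description of the multigraph process. An \nmR-graph $G$ consists of a labelled multigraph $H$ on $[1..n]$ together with a type function $t \colon [1..n] \to [1..q]$, and its weight is
\[
  \omega(G) = \frac{\seqv(H)}{2^m m!}
  \prod_{u \in [1..n]} r_{t(u)}
  \prod_{\edge{vw} \in H} R_{t(v), t(w)}.
\]
The key observation is that the product of edge weights depends only on the multiset of edges of $H$, while the fibers of the map $(v_1 w_1 \ldots v_m w_m) \mapsto H$ partition $[1..n]^{2m}$ into classes of exactly $\seqv(H)$ tuples. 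Hence summing $\omega(G)$ over all \nmR-graphs is the same as summing, over all type functions $t$ and all sequences $(v_1 w_1 \ldots v_m w_m) \in [1..n]^{2m}$, the quantity $\frac{1}{2^m m!} \prod_{u} r_{t(u)} \prod_{i=1}^m R_{t(v_i), t(w_i)}$.

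Next I would perform the sum over sequences for a fixed $t$. Since $\prod_{i=1}^m R_{t(v_i), t(w_i)}$ factors over the index $i$,
\[
  \sum_{(v_1 w_1 \ldots v_m w_m) \in [1..n]^{2m}} \prod_{i=1}^m R_{t(v_i), t(w_i)}
  = \Big( \sum_{v, w \in [1..n]} R_{t(v), t(w)} \Big)^m.
\]
Writing $\vect{n} = (n_1, \ldots, n_q)$ for the vector of type multiplicities of $t$, so that $\gvect{1}\vect{n} = n$, the inner sum equals $\sum_{i,j} n_i n_j R_{i,j} = \gvect{n} R \vect{n}$, and the vertex-weight product is $\prod_i r_i^{n_i} = \vect{r}^{\, \vect{n}}$.

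Finally I would regroup the sum over type functions $t$ by their multiplicity vector $\vect{n}$: there are exactly $\binom{n}{\vect{n}}$ type functions realizing a given $\vect{n}$ with $\gvect{1}\vect{n} = n$. Collecting the three steps gives
\[
  \GRnm = \frac{1}{2^m m!} \sum_{\{ \vect{n} \in \naturals^q\ |\ \gvect{1}\vect{n} = n\}} \binom{n}{\vect{n}} \vect{r}^{\, \vect{n}} (\gvect{n} R \vect{n})^m,
\]
which is the claim. The only delicate point is the first step: one must check carefully that weighting each multigraph by $\seqv$ and summing is equivalent to summing freely over all $2m$-tuples of vertices — i.e. that $\seqv(H)$ is precisely the size of the fiber over $H$ — after which everything reduces to a routine factorization of a geometric-type sum and a multinomial count.
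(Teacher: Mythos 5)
Your proposal is correct and follows essentially the same route as the paper: both convert the sum of weights (with the compensation factor $\seqv(G)/(2^m m!)$) into a free sum over $2m$-tuples of vertices for a fixed type assignment, factor the resulting sum into $(\gvect{n} R \vect{n})^m$, and then collect type assignments by their multiplicity vector to produce the multinomial coefficient. The only cosmetic difference is that the paper fixes the partition of $[1..n]$ into type classes first and sums over sequences second, whereas you interleave the two sums before regrouping.
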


\begin{proof}
Let us consider a fixed partition of the set of labels $[1..n]$
into $q$ sets $V_1, \ldots, V_q$ of sizes $n_1, \ldots, n_q$,
and denote by $G(\vect{n},m)$ the set of \nmR-graphs
where the type of the vertices in $V_i$ is $i$.
Then
the number of graphs in $G(\vect{n}, m)$ is expressed
by summation over all sequences of vertices as
\[
  \sum_{G \in G(\vect{n},m)} \weight(G)
  =
  \frac{1}{2^m m!} 
  \prod_{i=1}^q r_i^{n_i}
  \sum_{v_1,w_1, \ldots, v_m, w_m \in [1..n]^{2m}} 
  \prod_{i=1}^m R_{t(v_i), t(w_i)}.
\]
Switching the sum and the product, the previous equation becomes
\[
  \sum_{G \in G(\vect{n},m)} \weight(G)
  =
  \frac{1}{2^m m!}
  \prod_{i=1}^q r_i^{n_i}
  \Bigg( \sum_{1 \leq i, j \leq q} n_i n_j R \Bigg)^m
  =  
  \vect{r}^{\ \vect{n}}
  \frac{(\gvect{n} R \vect{n})^m}{2^m m!}.
\]
Equation~\eqref{eq:Gexact} is obtained by summation 
over all possible partitions $V_1 \uplus \cdots \uplus V_q = [1..n]$.
\end{proof}

To obtain the asymptotics of $\GRnm$,
we will apply in the proof of Theorem~\ref{th:global}
a multivariate Laplace method.
This method requires to give to the previous expression
a more suitable shape.

\begin{lemma} \label{th:laplace_setting}
  Let $\onex$ denote the set 
  $\{ \vx \in \reals^q_{\geq 0}\ |\ \gvone \vx = 1\}$.
  The number of \nmR-graphs is
  \begin{equation} \label{eq:Glaplace}
    \GRnm =
    \frac{n^{2m}}{2^m m!}
    \frac{1}{(2 \pi n)^{\frac{q-1}{2}}}
    \sum_{\{\vect{n} \in \naturals^q\ |\ \gvone \vect{n} = n\}}
    A_n \left( \frac{\vect{n}}{n} \right)
    e^{- n \ff_{\frac{m}{n}} \left( \frac{\vect{n}}{n} \right)},
  \end{equation}
  where,  with the usual conventions $0 \log(0) = 0$ and $0^0 = 1$,
  the functions $A_n$ and $\ff_{\mn}$
  are defined on $\onex$ by
  \begin{align*}
    A_n(\vx) 
    &= 
    \frac{n!}{n^n e^{-n} \sqrt{2 \pi n}} 
    \prod_{i=1}^q
    \frac{(n x_i)^{n x_i} e^{-n x_i} \sqrt{2 \pi n x_i}}{\Gamma(n x_i + 1)} 
    \frac{1}{\sqrt{x_i}},
    \\
    \ff_\mn(\vx) 
    &=
    \left( \log(\gvx) - \log(\gvect{r}) \right) \vx
    - \mn \log \left( \gvx R \vx \right).
  \end{align*}
\end{lemma}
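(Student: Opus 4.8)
The plan is to start from the exact formula of Theorem~\ref{th:exact} and perform purely algebraic manipulations, introducing the variable $\vx = \vect{n}/n$ so that the sum over $\vect{n}$ becomes a Riemann-type sum over lattice points of $\onex$. First I would pull the factor $n^{2m}$ out of $(\gvect{n} R \vect{n})^m$: since $\gvect{n} R \vect{n} = n^2\, \gvx R \vx$ where $\gvx = \gvect{n}/n$, we get $(\gvect{n} R \vect{n})^m = n^{2m} (\gvx R \vx)^m = n^{2m} e^{m \log(\gvx R \vx)}$, which accounts for the prefactor $\tfrac{n^{2m}}{2^m m!}$ and contributes the term $-\mn \log(\gvx R \vx)$ inside $\ff_\mn$ after writing $m = \mn n$ so that $e^{m\log(\gvx R \vx)} = e^{n \mn \log(\gvx R\vx)}$.

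Next I would handle the combinatorial weight $\binom{n}{\vect{n}} \vect{r}^{\,\vect{n}}$. Writing $\binom{n}{\vect{n}} = n!/\prod_i (n x_i)!$ and $\vect{r}^{\,\vect{n}} = \prod_i r_i^{n x_i} = e^{\sum_i n x_i \log r_i} = e^{n\, \log(\gvect{r})\, \vx}$ (using the coefficient-wise $\log$ convention and the bracket notation $\gvect{r}\,\vx = \sum_i x_i \log r_i$), the exponential part contributes $+\log(\gvect{r})\vx$, which appears with a minus sign in $\ff_\mn$, consistent with the $-n\ff_\mn$ in the display. The remaining work is to extract from $n!/\prod_i(nx_i)!$ both the $\log(\gvx)\vx$ term of $\ff_\mn$ and the Gaussian normalization $(2\pi n)^{-(q-1)/2}$ together with the bounded factor $A_n$. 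Here I would substitute the Stirling-type exact identity $k! = k^k e^{-k}\sqrt{2\pi k}\;\big(k!\,/\,(k^k e^{-k}\sqrt{2\pi k})\big)$ for each of $n!$, $(nx_1)!,\ldots,(nx_q)!$, using $\Gamma(nx_i+1) = (nx_i)!$; the "ideal" parts $k^k e^{-k}\sqrt{2\pi k}$ recombine as follows. The $e^{-k}$ factors cancel because $n = \sum_i n x_i$. The power parts give $n^n / \prod_i (nx_i)^{nx_i} = n^n / \big(n^n \prod_i x_i^{nx_i}\big) = \prod_i x_i^{-nx_i} = e^{-n \sum_i x_i\log x_i} = e^{-n\,\log(\gvx)\,\vx}$, which is exactly $e^{-n \cdot (\log(\gvx)\vx)}$, matching the first term of $\ff_\mn$ inside $-n\ff_\mn$. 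The square-root parts give $\sqrt{2\pi n} / \prod_i \sqrt{2\pi n x_i} = (2\pi n)^{(1-q)/2} \prod_i x_i^{-1/2}$; the factor $(2\pi n)^{(1-q)/2} = (2\pi n)^{-(q-1)/2}$ is pulled out front and the residual $\prod_i x_i^{-1/2}$ is absorbed into $A_n$. Finally, the ratios $k!\,/\,(k^k e^{-k}\sqrt{2\pi k})$ — one in the numerator from $n!$ and $q$ in the denominator from the $(nx_i)!$, which I would flip to the numerator as $(nx_i)^{nx_i}e^{-nx_i}\sqrt{2\pi nx_i}/\Gamma(nx_i+1)$ — are precisely what the definition of $A_n(\vx)$ collects, together with the $\prod_i x_i^{-1/2}$ noted above.

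Collecting these pieces gives exactly \eqref{eq:Glaplace}, so the lemma is established by direct computation; no asymptotics or convergence arguments are needed, only the algebraic bookkeeping and a check that the usual conventions $0\log 0 = 0$ and $0^0 = 1$ make every term well defined at the boundary of $\onex$ (in particular when some $x_i = 0$, the corresponding $n_i = 0$ and the factor in $A_n$ is interpreted as $1$, consistent with $\Gamma(1)=1$ and $0^0 = 1$). The only mildly delicate point — and the one I would be most careful about — is tracking the exponents of $2\pi n$ and of the $x_i$ consistently across the $q+1$ Stirling substitutions so that the split between the explicit prefactor $(2\pi n)^{-(q-1)/2}$ and the definition of $A_n$ comes out exactly as stated; there is no real obstacle, just an opportunity for sign or off-by-one errors in the exponents.
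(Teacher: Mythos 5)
Your proposal is correct and follows essentially the same route as the paper's proof: introduce the (exact) Stirling-ratio factorizations of $n!$ and each $\Gamma(n x_i+1)$ into Equation~\eqref{eq:Gexact}, rescale $\vect{n}$ by $1/n$, and collect the ideal parts into $e^{-n\ff_{\mn}(\vect{n}/n)}$ and the prefactor $(2\pi n)^{-(q-1)/2}$, with the remaining ratios forming $A_n$. The paper states this in one line; your write-up just carries out the same bookkeeping explicitly.
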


\begin{proof}
  We introduce in Expression~\eqref{eq:Gexact}
  the Stirling approximations
  of the factorials of the multinomial coefficient,
  and rescale each $n_i$ by a factor $1/n$
  \[
    \GRnm =
    \frac{n^{2m}}{2^m m!}
    \frac{1}{(2 \pi n)^{\frac{q-1}{2}}}
    \sum_{\substack{\vect{n} \in \naturals^q\\ \gvone \vect{n} = n}}
    \frac{n!}{n^n e^{-n} \sqrt{2 \pi n}}
    \prod_{i=1}^q
    \frac{n_i^{n_i} e^{-n_i} \sqrt{2 \pi n_i}}{n_i! \sqrt{n_i/n}}
    \left( 
      \frac{ \vect{r}^{\ \vect{n} / n} }
        { \left( \vect{n} / n \right)^{ \vect{n} / n } }
    \right)^n
    \left( \frac{\gvect{n}}{n} R \frac{\vect{n}}{n} \right)^m.
  \]
  The functions $A_n$ and $\ff_\mn$ are then introduced
  to simplify this expression.
\end{proof}

Let $E$ denote a $q \times (q-1)$ matrix 
with left-kernel of dimension $1$
containing the vector $\gvone$, \textit{e.g.}
\[ 
  E = 
  \smat{1 & 0 & \cdots & 0 \\
  0 & 1 & {\scriptstyle{\ddots}} & \vdots \\
  \vdots & \ddots & \ddots & 0 \\
  0 & \cdots & 0 & 1 \\
  -1 & \cdots & \cdots & -1}.
\]
Two vectors $\vect{u}$ and $\vect{v}$ belong to $\onex$
only if there is a vector $\vepsilon$ of dimension $q-1$
for which
$
  \vect{u} = \vect{v} + E \vepsilon.
$
The following lemma provides tools to locate
the minimums of the function $\ff_{\mn}$.

\begin{lemma} \label{th:beta}
For all $c > 0$, the Taylor expansion of $\ff_{\mn}$ near any point $\vx$
in the interior of $\onex$ is
\[
  \ff_{\mn}(\vx + E \vepsilon)
  =
  \ff_{\mn}(\vx)
  +
  \gradfx \vepsilon
  +
  \frac{1}{2}
  \gvepsilon \Hfx \vepsilon
  +
  \bigO(\| \epsilon \|^3),
\]
where the gradient vector and the Hessian matrix
have dimension $q-1$ and are defined by
\begin{align*}
  \gradfx &=
  \left(
  \log (\gvx)
  - \log (\gvect{r})
  - \frac{2\mn}{\gvx R \vx} \gvx R
  \right) E,
  \\
  \Hfx &=
  \transp{E}
  \left(
    \dx \phantom{}^{-1}
    +
    \frac{2\mn}{\gvx R \vx}
    \left(
      \frac{2}{\gvx R \vx}
      R \vx\, \gvx R
      - R
    \right)
  \right)
  E.
\end{align*}
If $\vphi$ is a minimum of $\ff_{\mn}$,
then $\vphi$ is in the interior of $\onex$, 
$\gradf = \gvzero$ and $\Hf$ is positive-semidefinite.
\end{lemma}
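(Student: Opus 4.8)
Proof proposal. The Taylor expansion is a routine computation: set $g(\vepsilon) = \ff_{\mn}(\vx + E\vepsilon)$ and differentiate twice at $\vepsilon = \vzero$. For the first-order term, the derivative of $(\log(\gvx) - \log(\gvect{r}))\vx$ in a direction $\vect{d}$ is $(\log(\gvx) - \log(\gvect{r}))\vect{d} + \gvone\vect{d}$ by the product rule, but $\gvone E = \gvzero$ kills the second piece; the derivative of $-\mn\log(\gvx R \vx)$ is $-\frac{2\mn}{\gvx R \vx}\gvx R\,\vect{d}$ using symmetry of $R$. Setting $\vect{d} = E\vepsilon$ gives the stated gradient. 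For the Hessian, differentiate once more: $\partial^2$ of the entropy term contributes $\transp{E}\dx^{-1}E$, and $\partial^2$ of $-\mn\log(\gvx R \vx)$ gives, by the quotient rule, $\transp{E}\bigl(\frac{4\mn}{(\gvx R\vx)^2}R\vx\,\gvx R - \frac{2\mn}{\gvx R\vx}R\bigr)E$. Collecting terms yields $\Hfx$, and the cubic remainder follows from smoothness of $\ff_{\mn}$ on the interior of $\onex$ (all three functions $\log(\gvx)$, $\dx^{-1}$ and $\frac{1}{\gvx R\vx}$ are $C^\infty$ away from the boundary, since $R$ irreducible with non-negative entries and $\vx$ interior forces $\gvx R\vx > 0$).

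The substantive part is the claim about minimizers. First I would show a minimum $\vphi$ cannot lie on the boundary of $\onex$. Suppose some coordinate $\varphi_i = 0$. Consider moving along a segment from $\vphi$ into the interior, say $\vphi + t\vect{d}$ with $\vect{d}$ chosen so that $d_i > 0$ and $\gvone\vect{d} = 0$; one can take $\vect{d}$ pointing toward the barycenter. The entropy-like term $(\log(\gvx) - \log(\gvect{r}))\vx$ contains the contribution $x_i \log x_i$, whose derivative $\log x_i + 1 \to -\infty$ as $x_i \to 0^+$, so this term strictly decreases as we leave the face $\{x_i = 0\}$. The term $-\mn\log(\gvx R\vx)$ is bounded near $\vphi$ provided $\gvphi R\vphi > 0$ there; if instead $\gvphi R\vphi = 0$ then $\ff_{\mn}(\vphi) = +\infty$ by the $-\log$, contradicting that $\vphi$ is a (finite) minimum — here I use that $\ff_{\mn}$ takes finite values somewhere, e.g.\ at the barycenter, which holds because $R$ is irreducible so $\gvone R\vone > 0$. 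Hence near a boundary minimizer $\ff_{\mn}$ would be strictly decreasing in the inward direction $\vect{d}$, contradicting minimality. So $\vphi$ is interior. The conditions $\gradf = \gvzero$ and $\Hf \succeq 0$ are then the standard first- and second-order necessary conditions for an interior local minimum of the smooth function $\vepsilon \mapsto \ff_{\mn}(\vphi + E\vepsilon)$, valid because $E$ parameterizes the affine hull of $\onex$ around $\vphi$ without constraint.

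The main obstacle is the boundary-exclusion argument: one must handle the case where $\gvphi R\vphi$ itself vanishes at the candidate boundary point (so that $\ff_{\mn}$ is $+\infty$ there and comparing "values" needs care), and one must check that the inward direction $\vect{d}$ can be chosen so that \emph{every} vanishing coordinate increases while no positive coordinate is driven negative along a short segment — pointing toward the barycenter of the full simplex achieves this, since $d_i = \tfrac{1}{q} - \varphi_i > 0$ exactly when $\varphi_i < \tfrac1q$, which holds for the zero coordinates, and the segment stays in $\onex$ for small $t$. Once interiority is secured, the rest is immediate from the Taylor expansion proved in the first half of the lemma.
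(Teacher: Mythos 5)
Your proposal is correct and follows essentially the same route as the paper: compute the gradient and Hessian of the unconstrained extension of $\ff_{\mn}$ and restrict along $E$, then exclude boundary minimizers by noting that the $x_i\log x_i$ contribution makes $\ff_{\mn}$ strictly decrease in the inward direction when a coordinate vanishes (the paper phrases this as the gradient blowing up on the boundary). Your write-up is merely more explicit about the choice of inward direction and the degenerate case $\gvphi R \vphi = 0$, which the paper leaves implicit.
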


\begin{proof}
Let $\Psi_{\mn}$ denote the function
$
  \vx \to
  \left( \log(\gvx) - \log(\gvect{r}) \right) \vx
  - \mn \log \left( \gvx R \vx \right)
$
from $[0,1]^q \setminus \{\vzero\}$ to $\reals$.
Its restriction to $\onex$ is equal to $\ff_{\mn}$
and its Taylor expansion starts with
\[
  \Psi_\mn(\vx + \vepsilon)
  =
  \Psi_\mn(\vx)
  +
  \gvect{\nabla}_{\Psi_{\mn}(\vx)} \vepsilon
  +
  \frac{1}{2}
  \gvepsilon
  \mathcal{H}_{\Psi_{\mn}(\vx)}
  \vepsilon
  +
  \bigO(\| \vepsilon \|^3)
\]
where the gradient $\gvect{\nabla}_{\Psi_{\mn}(\vx)}$
and the Hessian matrix $\mathcal{H}_{\Psi_{\mn}(\vx)}$ 
of $\Psi_{\mn}$ are computed using partial derivations.
It follows that the Taylor expansion of $\ff_{\mn}$
near any point $\vx$ in the interior of $\onex$ is
\[
  \ff_\mn(\vx + E \vepsilon)
  =
  \ff_\mn(\vx)
  +
  \gvect{\nabla}_{\Psi_{\mn}(\vx)} E \vepsilon
  +
  \frac{1}{2}
  \gvepsilon \transp{E}
  \mathcal{H}_{\Psi_{\mn}(\vx)}
  E \vepsilon
  +
  \bigO(\| \vepsilon \|^3).
\]
Observing the limit of the gradient $\gradfx$ of $\ff_{\mn}$
when one coordinate of~$\vx$ vanishes,
we conclude that the local minimums of $\ff_{\mn}$
cannot be reached on the boundary of $\onex$,
and must therefore cancel $\gradfx$.
\end{proof}


Gathering the previous results, we can finally apply the multivariate Laplace method.

\begin{theorem} \label{th:global}
Let $[a,b]$ be a compact interval such that the function
$(c,\vx) \to \det(\Hfx)$ from $[a,b] \times \onex$ to $\reals$
does not vanish,
and let $\vphi_{\mn,1}, \ldots, \vphi_{\mn,s}$ denote the solutions
of $\gradf = \gvzero$ that correspond to local minimums of $\ff_{\mn}$, 
then when $c = \frac{m}{n}$ is in $[a,b]$, the asymptotics of \nmR-graphs is
\[
  \GRnm \sim
  \frac{n^{2m}}{2^m m!}
  \sum_{ \vphi \in \{ \vphi_{\mn,1}, \ldots, \vphi_{\mn, s} \} }
  \left( 
    \frac{ \vect{r}^{\, \vphi }}
    { \vphi^{\, \vphi }}
  \right)^n
  \frac{ \left( \gvphi R \vphi \right)^m }
  { \sqrt{\det(\mathcal{H}_{\ff_{\mn}(\vphi)}) \prod_{i=1}^q \varphi_i} }.
\]
\end{theorem}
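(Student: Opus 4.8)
The plan is to apply a multivariate Laplace method to the sum over lattice points in Expression~\eqref{eq:Glaplace} of Lemma~\ref{th:laplace_setting}. First I would recall that $\GRnm$ has been rewritten as $\frac{n^{2m}}{2^m m!} (2\pi n)^{-(q-1)/2} \sum_{\vect{n}} A_n(\vect{n}/n) e^{-n \ff_{\mn}(\vect{n}/n)}$, a sum over the $(q-1)$-dimensional lattice $\{\vect{n} \in \naturals^q : \gvone\vect{n} = n\}$, which after the change of variables $\vect{n} = \vphi n + E\vect{k}$ (with $\vphi$ a minimum of $\ff_{\mn}$ and $\vect{k} \in \integers^{q-1}$) becomes a sum over a full-dimensional lattice in $\reals^{q-1}$ with mesh $1/n$. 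The strategy is the classical one: the exponent $-n\ff_{\mn}(\vx)$ is sharply peaked at the finitely many local minima $\vphi_{\mn,1},\ldots,\vphi_{\mn,s}$ of $\ff_{\mn}$ on $\onex$, so one localizes the sum to small neighborhoods of these points, replaces the lattice sum by a Gaussian integral using the Taylor expansion from Lemma~\ref{th:beta}, and evaluates that integral.

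Second, I would carry out the local analysis at a single minimum $\vphi$. By Lemma~\ref{th:beta}, near $\vphi$ we have $\ff_{\mn}(\vphi + E\vepsilon) = \ff_{\mn}(\vphi) + \tfrac12 \gvepsilon \Hf \vepsilon + \bigO(\|\vepsilon\|^3)$ since $\gradf = \gvzero$, and $\Hf$ is positive definite because its determinant does not vanish on $[a,b]\times\onex$ by hypothesis (combined with positive-semidefiniteness from Lemma~\ref{th:beta}). Also $A_n(\vx) \to 1$ uniformly on compact subsets of the interior of $\onex$ by Stirling's formula, so $A_n(\vphi + E\vepsilon) = 1 + \smallo(1)$ on the relevant scale $\|\vepsilon\| = \bigO(n^{-1/2+\delta})$. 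Writing $\vepsilon = \vect{k}/n$, the contribution of this neighborhood is
\[
  \frac{n^{2m}}{2^m m!} (2\pi n)^{-\frac{q-1}{2}} e^{-n \ff_{\mn}(\vphi)} (1+\smallo(1)) \sum_{\vect{k}} e^{-\frac{1}{2n} \transp{\vect{k}} \Hf \vect{k}}.
\]
The lattice sum $\sum_{\vect{k} \in \integers^{q-1}} e^{-\frac{1}{2n}\transp{\vect{k}}\Hf\vect{k}}$ is asymptotically equal to the Gaussian integral $\int_{\reals^{q-1}} e^{-\frac{1}{2n}\transp{\vect{x}}\Hf\vect{x}}\,d\vect{x} = (2\pi n)^{(q-1)/2} / \sqrt{\det\Hf}$, by a standard Euler--Maclaurin or Poisson-summation comparison (the lattice here is unimodular since $E$ sends $\integers^{q-1}$ onto the sublattice $\{\vect{k}:\gvone\vect{k}=0\}$ up to a correction absorbed in the shift). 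The factor $(2\pi n)^{(q-1)/2}$ cancels the prefactor, leaving $\frac{n^{2m}}{2^m m!} e^{-n\ff_{\mn}(\vphi)} / \sqrt{\det\Hf}$ per minimum.

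Third, I would identify $e^{-n\ff_{\mn}(\vphi)}$ with the stated closed form: by definition of $\ff_{\mn}$,
\[
  e^{-n \ff_{\mn}(\vphi)} = \left(\frac{\vect{r}^{\,\vphi}}{\vphi^{\,\vphi}}\right)^n (\gvphi R \vphi)^{\mn n} = \left(\frac{\vect{r}^{\,\vphi}}{\vphi^{\,\vphi}}\right)^n (\gvphi R \vphi)^m,
\]
and the $\prod_i \varphi_i^{-1/2}$ under the square root comes from the $\prod_i x_i^{-1/2}$ factor inside $A_n$, evaluated at $\vphi$ (I was slightly sloppy above: $A_n(\vphi) \to \prod_i \varphi_i^{-1/2}$, not $1$, so that factor must be tracked through). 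Summing the $s$ contributions and checking that points of $\onex$ outside all the peak neighborhoods contribute $\smallo$ of the main term — which follows because $\ff_{\mn}(\vx) - \ff_{\mn}(\vphi)$ is bounded below by a positive constant there, with care near the boundary of $\onex$ handled via the divergence of $\ff_{\mn}$ noted in Lemma~\ref{th:beta} — gives the claimed asymptotics.

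The main obstacle I expect is the rigorous tail estimate and the lattice-to-integral comparison done uniformly in $c \in [a,b]$: one must control the sum over lattice points far from the minima (where the quadratic approximation fails and one only has the crude bound $\ff_{\mn}(\vx) \geq \ff_{\mn}(\vphi) + \text{const}$), ensure the $\bigO(\|\vepsilon\|^3)$ error in the Taylor expansion is negligible after multiplication by $n$ on the scale $\|\vepsilon\| \sim n^{-1/2}\log n$, and confirm that $A_n$ converges to $\prod_i x_i^{-1/2}$ with sufficient uniformity including a safe treatment of coordinates of $\vect{n}$ that are small (of order $o(n)$) even though the minima themselves lie in the interior. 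This is precisely the kind of technical bookkeeping that a careful invocation of a standard multivariate Laplace/saddle-point lemma for lattice sums (as in the cited work of \cite{JKLP93}) is meant to package.
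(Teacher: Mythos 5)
Your proposal is correct and follows essentially the same route as the paper: starting from the representation of Lemma~\ref{th:laplace_setting}, expanding $A_n$ and $e^{-n\ff_{\mn}}$ around each local minimum via Lemma~\ref{th:beta}, and applying the multivariate Laplace method (the paper simply cites a packaged version of the lattice-sum-to-Gaussian-integral step that you unfold by hand, including the $\prod_i \varphi_i^{-1/2}$ factor from $A_n$ that you correctly catch). No substantive difference in approach.
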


\begin{proof} 
We inject in the integral representation of the sum~\eqref{eq:Glaplace}
the following relations
\[
A_n(\vphi + E \vepsilon) \sim
\prod_{i=1}^q \varphi_i^{-1/2}
+ \bigO(\|\vepsilon\|),
\ 
\quad 
e^{-n \ff_{\mn}(\vphi + E \vepsilon)} = 
\left(
\frac{ \vect{r}^{\, \vphi }}
{\vphi^{\, \vphi }} 
\right)^n
\left( \gvphi R \vphi \right)^m
e^{- \frac{1}{2} n \gvepsilon \Hf \vepsilon + \bigO(n \| \vepsilon \|^3)},
\]
valid for any minimum $\vphi$ of $\ff_{\mn}$,
and apply the multivariate Laplace method,
presented in \cite[Chapter~$5$]{PW13}).
\end{proof}

The previous theorem requires to locate the minimums of $\ff_{\mn}$,
and to avoid the values of $\mn$ for which those minimums cross or merge.
Even when the dimension of the matrix $R$ is $2$, 
those minimums can exhibit interesting behaviors.
For example, 
for $R = \smat{2&1\\1&2}$, 
$\ff_{\mn}$ has a unique minimum when $\mn \leq 1/6$
and two local minimums when $\mn > 1/6$.
It would be interesting to investigate if this transformation
in the analytic properties of $\ff_{\mn}$ matches
an evolution in the typical structure of \Rr-graphs.
Theorems~\ref{th:small_mn} and \ref{th:eigenvectorone} 
provide two examples of families of parameters $R$, $\vect{r}$ and $\frac{m}{n}$
for which $\ff_{\mn}$ has a unique minimum and 
a more explicit asymptotics for the number of \nmR-graphs can be derive.

    \section{Trees and unicycles} \label{sec:UV}

An \emph{\Rr-tree} is a connected \Rr-graph that contains no cycle.
Such a graph is \emph{rooted} if one vertex, 
called the \emph{root}, is marked.
An \emph{\Rr-unicycle} is a connected \Rr-graph
with exactly one cycle.
A classic result of \cite{ER60}
states that almost all $(n,m)$-graphs with $\frac{m}{n} < \frac{1}{2}$
contain only trees and unicycles. 
In this section, we derive a similar result for \Rr-graphs.
%
We also give a more explicit asymptotics for the number of \nmR-graphs
than in Theorem~\ref{th:global}
when $\frac{m}{n}$ is smaller than a value $\beta$, defined in Theorem~\ref{th:small_mn}.

\begin{lemma}
  Let $T_i(z)$, $U(z)$ and $V(z)$ denote the generating functions
  of \Rr-rooted trees with root of type $i$, \Rr-trees and \Rr-unicycles,
  and let $\vT(z)$ denote the vector $\transp{(T_1(z), \ldots, T_q(z))}$,
  then
  \[
    \vT(z)
    = z \diag{r} \exp \left( R \vT(z) \right), \quad
    U(z) = 
    \gvone \vT(z) - \frac{1}{2} \gvT(z) R \vT(z), \quad
    V(z) = 
    - \frac{1}{2} \log \left( \det \left( \Id - \dT(z) R \right) \right).
  \]
\end{lemma}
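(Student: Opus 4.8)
The plan is to establish the three generating function identities by combining the symbolic method for labelled structures with the weighting conventions of the \Rr-model.

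\medskip

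First I would derive the equation for $\vT(z)$. A rooted \Rr-tree with root of type $i$ decomposes canonically: remove the root, and what remains is a (possibly empty) set of rooted \Rr-subtrees, each attached to the root by an edge. If a subtree has root of type $j$, the connecting edge carries weight $R_{i,j}$. Because the collection of pendant subtrees is an unordered set and the whole structure is labelled, the exponential formula gives that the generating function of the forest of subtrees hanging off a type-$i$ root is $\exp\!\big(\sum_{j=1}^q R_{i,j} T_j(z)\big)$, which is the $i$-th coordinate of $\exp(R \vT(z))$. Multiplying by $z$ to account for the root vertex and by $r_i$ for its weight yields $T_i(z) = z\, r_i \exp\!\big((R\vT(z))_i\big)$, i.e. $\vT(z) = z\,\diag{r}\exp(R\vT(z))$ in vector form. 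One should check that the compensation factor $\kappa$ is $1$ throughout: trees have no multiple edges and no loops, so $\seqv$ is exactly $2^m m!$ on each tree, hence $\kappa = 1$ and the naive symbolic method applies without correction.

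\medskip

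Next, for $U(z)$, I would use the standard ``dissymmetry'' / pointing trick that relates rooted and unrooted trees. Pointing an \Rr-tree at a vertex gives $\gvone\vT(z)$ (summing $T_i(z)$ over all possible types $i$ of the distinguished vertex). Pointing at an edge instead: an edge of an \Rr-tree, once distinguished and cut, splits the tree into an ordered pair of rooted subtrees whose roots, of types $i$ and $j$, were joined by an edge of weight $R_{i,j}$; summing over $i,j$ gives $\gvT(z) R \vT(z)$, but each unrooted edge is counted twice by the ordered pair, so the generating function of edge-pointed \Rr-trees is $\tfrac12\gvT(z) R \vT(z)$. Since a tree on $k$ vertices has $k-1$ edges, the dissymmetry identity (vertices minus edges equals one, per tree) gives $U(z) = \gvone\vT(z) - \tfrac12 \gvT(z) R \vT(z)$.

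\medskip

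Finally, for $V(z)$, an \Rr-unicycle is an (undirected) cycle of length $\ell \geq 1$ of vertices, with a rooted \Rr-tree hanging off each cycle vertex (loops $\ell=1$ and double edges $\ell=2$ allowed, consistent with the model). A directed cycle of length $\ell$ with prescribed vertex types $i_1, \ldots, i_\ell$ contributes $\prod_{a} R_{i_{a}, i_{a+1}} \cdot \prod_a T_{i_a}(z)$, which summed over types is $\trace\!\big((\diag{T(z)} R)^\ell\big)$ up to handling the tree factor; combining the tree weights with the edge weights, the generating function of cycles of length $\ell$ with pendant trees is $\tfrac{1}{2\ell}\trace\!\big((\dT(z) R)^\ell\big)$, the $\tfrac1\ell$ coming from the cyclic rotations and the $\tfrac12$ from the two orientations (for $\ell = 1,2$ one must verify this agrees with the compensation factor $\kappa$ of a loop, resp. a double edge, which is where the ``$2^m m!$'' normalization does real work). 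Summing over $\ell \geq 1$ and using $\sum_{\ell \geq 1}\tfrac1\ell x^\ell = -\log(1-x)$ applied to the eigenvalues of $\dT(z) R$ gives $V(z) = -\tfrac12\sum_{\ell\geq 1}\tfrac1\ell\trace\!\big((\dT(z)R)^\ell\big) = -\tfrac12\log\det\!\big(\Id - \dT(z) R\big)$.

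\medskip

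The main obstacle I expect is the careful bookkeeping of the compensation factors for short cycles (loops and double edges) in the unicycle computation: the symbolic method most naturally enumerates cycles of length $\geq 3$, and one has to confirm that the $\tfrac{1}{2\ell}$ coefficient is exactly the right weight for $\ell = 1$ and $\ell = 2$ once $\kappa$ is taken into account, so that the clean formula $-\tfrac12\log\det(\Id - \dT R)$ holds with no correction terms. The tree and edge-pointing steps are routine applications of the exponential formula and dissymmetry.
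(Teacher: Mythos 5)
Your proposal is correct and follows essentially the same route as the paper: the root-plus-set-of-subtrees decomposition via the symbolic method for $\vT(z)$, the Dissymmetry Theorem for $U(z)$, and the cycle-of-rooted-trees computation (a variation of Proposition~V.6 of Flajolet--Sedgewick) yielding the $-\tfrac12\log\det$ formula for $V(z)$, including the correct treatment of loops and double edges via the compensation factor. The paper's proof is simply a terser version of yours, delegating the dissymmetry and trace--log-determinant steps to the cited references.
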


\begin{proof}
  An \Rr-rooted tree is a root with a set of sons which
  are \Rr-rooted trees themself. 
  Let $i$ denote the type of the root,
  and $j$ the type of the root of one of those sons,
  then the root has weight $r_i$ and
  the weight of the edge linking the root to the son
  is the coefficient $R_{i,j}$.
  Using the \emph{Symbolic Method}
  (see for example the book of \cite{FS09})
  the previous combinatorial description translates into 
  the first relation on $\vT(z)$.
  The expression for $U(z)$ is obtained
  using the \emph{Dissymmetry Theorem} presented by \cite{BLL97}.
  The proof of the expression of $V(z)$
  is a variation of~\cite[Proposition~$V.6$]{FS09}.
\end{proof}

\begin{lemma}
  The generating functions $\vT(z)$ 
  has the following singular expansions
  \[
    \vT(z) =
    \vtau 
    - \vgamma \sqrt{1 - z/\rho}
    + \bigO(1-z/\rho)
  \]
  where the value $\rho$ and the vectors $\vtau$ and $\vgamma$
  have positive coefficients and are characterized by 
  the system 
  \begin{equation} \label{eq:tau_rho_gamma}
    \vtau =
    \rho \diag{r} \exp \left( R \vtau \right),
    \quad
    \left( \Id - \diag{\tau} R \right) \vgamma = 0,
    \quad
    \frac{1}{2} \gvgamma R \dgamma R \vgamma = \gvone \vgamma.
  \end{equation}
\end{lemma}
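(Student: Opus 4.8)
\medskip

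The plan is to treat $\vT(z)$ as defined implicitly by the fixed-point equation $\mathbf{F}(z,\vect{y}):=\vect{y}-z\diag{r}\exp(R\vect{y})=\vzero$ of the previous lemma, and to run the singularity analysis of strongly connected positive systems, in the spirit of \cite[Section~VII.6]{FS09} (Drmota--Lalley--Woods). Here $\vT(z)$ is the unique formal power series with $\vT(0)=\vzero$ solving this system; since it enumerates weighted rooted \Rr-trees it has non-negative coefficients, and (we assume, as we may, that $\vect{r}$ has positive coordinates) its coordinates are positive for $z$ in $]0,\rho[$, where $\rho\in\,]0,+\infty]$ is a common radius of convergence --- common because the dependency digraph of the system is the support graph of $R$, strongly connected by irreducibility. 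Differentiating $\mathbf{F}$ and substituting $T_i=zr_i\exp((R\vT)_i)$, the Jacobian $\partial_{\vect{y}}\mathbf{F}$ equals $\Id-\dT(z)R$, so the analytic implicit function theorem keeps $\vT$ analytic wherever this matrix is invertible.

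First I would pin down $\rho$, $\vtau$ and $\vgamma$ by Perron--Frobenius. For real $z$ in $]0,\rho[$ the matrix $\dT(z)^{1/2}R\,\dT(z)^{1/2}$ is symmetric with non-negative entries and the support of $R$, hence irreducible; its dominant eigenvalue $\lambda(z)$ is simple and positive, equals that of $\dT(z)R$, is continuous, tends to $0$ as $z\to0^+$, and is strictly increasing because every $T_i(z)$ is. Thus $\Id-\dT(z)R$ is singular exactly when $\lambda(z)=1$. The standard dichotomy for such systems --- either some coordinate of $\vT$ tends to $+\infty$ at the radius of convergence, which the exponential nonlinearity would already have translated into $\lambda>1$ at an earlier point, or $\vT$ stays finite there and the Jacobian degenerates --- forces $\rho<+\infty$, $\vtau:=\lim_{z\to\rho^-}\vT(z)$ finite with positive coordinates, and $\lambda(\rho)=1$, hence $\det(\Id-\diag{\tau}R)=0$ and (by continuity) $\vtau=\rho\diag{r}\exp(R\vtau)$: the first equation of~\eqref{eq:tau_rho_gamma}. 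Since $1$ is then a simple eigenvalue of the irreducible matrix $\diag{\tau}R$, its right eigenspace is $\reals\vgamma$ with $\vgamma$ positive, i.e. $(\Id-\diag{\tau}R)\vgamma=\vzero$; the matching left eigenvector is $\gvect{w}:=\gvgamma\diag{\tau}^{-1}$, since $\gvect{w}\diag{\tau}R=\gvgamma R=\transp{(R\vgamma)}=\transp{(\diag{\tau}^{-1}\vgamma)}=\gvect{w}$ (using $R=\transp{R}$ and $R\vgamma=\diag{\tau}^{-1}\vgamma$).

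For the singular expansion itself I would set $\vect{u}(z):=\vtau-\vT(z)$ and $\delta:=1-z/\rho$, both tending to $0$ as $z\to\rho^-$. Using $z\diag{r}\exp(R\vtau)=(1-\delta)\vtau$, the functional equation becomes $\vtau-\vect{u}=(1-\delta)\diag{\tau}\exp(-R\vect{u})$ with $\exp$ taken coordinate-wise; expanding $\exp(-R\vect{u})=\vone-R\vect{u}+\tfrac12\operatorname{diag}(R\vect{u})(R\vect{u})+\bigO(\|\vect{u}\|^3)$ and rearranging yields
\[
  (\Id-\diag{\tau}R)\,\vect{u}
  =\delta\vtau-\delta\,\diag{\tau}R\vect{u}
    -\tfrac{1-\delta}{2}\,\diag{\tau}\operatorname{diag}(R\vect{u})(R\vect{u})
    +\bigO(\|\vect{u}\|^3).
\]
The leading balance of this identity forces the dominant part of $\vect{u}$ into $\ker(\Id-\diag{\tau}R)=\reals\vgamma$, so $\vect{u}=\vgamma\,h(z)+\bigO(h(z)^2)$ with $h(z)\to0$; applying $\gvect{w}$ kills the left-hand side, and comparing the $\bigO(\delta)$ term $\delta\,\gvect{w}\vtau$ with the $\bigO(h^2)$ term $\tfrac12 h^2\,\gvect{w}\diag{\tau}\operatorname{diag}(R\vgamma)(R\vgamma)$ (the remaining contributions being $\bigO(h^3)$) gives $h(z)\sim\kappa\sqrt{\delta}$ with $\kappa^2=2\,(\gvect{w}\vtau)/(\gvect{w}\diag{\tau}\operatorname{diag}(R\vgamma)(R\vgamma))$. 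Normalising $\vgamma$ so that $\kappa=1$ is exactly the requirement $2\,\gvect{w}\vtau=\gvect{w}\diag{\tau}\operatorname{diag}(R\vgamma)(R\vgamma)$; substituting $\gvect{w}=\gvgamma\diag{\tau}^{-1}$, using $\diag{\tau}^{-1}\vtau=\vone$ and $R=\transp{R}$, the left side becomes $2\,\gvone\vgamma$ and the right side $\gvgamma\operatorname{diag}(R\vgamma)(R\vgamma)=\gvgamma R\dgamma R\vgamma$, i.e. the third equation of~\eqref{eq:tau_rho_gamma}. Finally the same strongly-connected-positive-system machinery (or a direct bootstrap of the remainder in a $\Delta$-domain anchored at $\rho$) promotes $\vect{u}\sim\vgamma\sqrt{\delta}$ to the stated $\vT(z)=\vtau-\vgamma\sqrt{1-z/\rho}+\bigO(1-z/\rho)$.

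The hard part is exactly the ingredient I would quote from the general theory: that $\rho$ is finite, that $\vtau$ is finite and interior, and that the dominant singularity is genuinely of square-root type, with no logarithmic factor and no other exponent. That is where the irreducibility of $R$ (strong connectivity of the system) and its automatic aperiodicity --- the right-hand side is built from $\exp$, whose series has full support, so $\rho$ is the unique dominant singularity --- are indispensable. Everything else --- extracting $\vtau$, $\vgamma$, $\gvect{w}$ and the normalisation --- is the routine computation sketched above.
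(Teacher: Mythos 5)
Your proof is correct and follows essentially the same route as the paper: invoke the Drmota--Lalley--Woods theory of irreducible positive systems for the existence of the square-root singular expansion, then obtain the three constraints of~\eqref{eq:tau_rho_gamma} by substituting that expansion into $\vT(z)=z\diag{r}\exp(R\vT(z))$ and matching powers of $\sqrt{1-z/\rho}$ (your solvability condition against the left null vector $\gvgamma\dtau^{-1}$ is exactly the identification of the coefficient of $1-z/\rho$). You merely spell out the Perron--Frobenius and coefficient-matching details that the paper delegates to its citation of Drmota.
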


\begin{proof}
  The square-root singular expansion of $\vT(z)$
  is a consequence of \cite[Proposition~$3$]{D99}.
  The constraints on its coefficients are obtained
  by injection of this expansion into the relation $\vT(z) = z \diag{r} \exp(R \vT(z))$
  and identification of the coefficients corresponding to the same power of $\sqrt{1-z/\rho}$.
  %
\end{proof}

We can now build \Rr-graphs
that contain only trees and unicycles.

\begin{theorem} \label{th:UV}
  We set $\alpha = \frac{1}{2} \frac{\gvtau R \vtau}{\gvone \vtau}$.
  For $\mn = \frac{m}{n}$ in any closed interval 
  of $] 0, \alpha [$, let $\zeta_{\mn}$ and $\vphi_{\mn}$ be characterized by 
  \begin{equation} \label{eq:UV_saddle_point}
    \frac{1}{2}
    \frac{\gvT(\zeta_{\mn}) R \vT(\zeta_{\mn})}
    {\gvone \vT(\zeta_{\mn})}
    =
    \mn
    \quad
    \text{and }
    \quad
    \vphi_{\mn} = \frac{\vT(\zeta_{\mn})}{\gvone \vT(\zeta_{\mn})},
  \end{equation}
  then the number of \nmR-graphs that contain
  only trees and unicycles is
  \begin{align}
    &\UVnm 
    \sim
    \frac{n^{2m}}{2^m m!}
    C_{\mn, \vphi_{\mn}}^{-1/2}
    \left(
      \frac{\vect{r}^{\, \vphi_{\mn}}}
        {\vphi_{\mn}^{\, \vphi_{\mn}}} 
    \right)^n
    \left( \gvphi_{\mn} R \gvphi_{\mn} \right)^m, \nonumber
  \\
  \text{where } \quad& 
  \label{eq:C}
    C_{\mn, \vx} =   
    \frac{1}{\mn}
    \left(
      \left(1-\mn \right) \gvone 
        \Big( \Id - \frac{2 \mn}{\gvx R \vx} \dx R \Big)^{-1} 
      \vx
      - 1
    \right)
    \det \left( \Id - \frac{2 \mn}{\gvx R \vx} \dx R \right).
  \end{align}
\end{theorem}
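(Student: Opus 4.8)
The plan is to express the generating function of \Rr-graphs built only from trees and unicycles in closed form, then extract coefficients by a saddle-point analysis whose critical equation is precisely the first relation in \eqref{eq:UV_saddle_point}. Concretely, a graph with only trees and unicycles is a set of \Rr-trees and \Rr-unicycles, so its exponential generating function in $z$ (marking vertices) is $\exp(U(z) + V(z))$. To also track the number of edges we introduce a second variable $w$ marking edges; since a forest on the vertex set of a graph with only trees and unicycles has $n - (\text{number of unicyclic components})$ edges, and each unicycle contributes exactly one extra edge, the bivariate generating function is $\exp\!\big(U(z) + V(z,w)\big)$ with the edge-weight $w$ inserted appropriately — equivalently one substitutes $R \mapsto wR$ inside the formulas of the first lemma so that $\vT$, $U$ and $V$ all acquire the marking. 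Then $\UVnm = \frac{n!}{2^m m!}\,[z^n w^m]\,\kappa\text{-weighted generating function}$, and the $2^m m!$ and $n!$ bookkeeping matches the normalisation used in Theorem~\ref{th:global}.

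Next I would perform the coefficient extraction. For fixed $m = \mn n$ one does a double saddle-point (in $z$ and $w$) on $[z^n w^m]\exp(U(z,w) + V(z,w))$; the $w$-integral fixes the ratio of edges to the tree-structure, and using the identity $\gvT R \vT = 2\big(\gvone \vT - U\big)$ together with $z \partial_z \vT = (\Id - \dT R)^{-1}\vT$ one checks that the saddle-point condition in the $w$ variable is exactly $\frac{1}{2}\,\frac{\gvT(\zeta_{\mn}) R \vT(\zeta_{\mn})}{\gvone \vT(\zeta_{\mn})} = \mn$, which defines $\zeta_{\mn}$, while the $z$-saddle is automatically located there as well. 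Writing $\vphi_{\mn} = \vT(\zeta_{\mn})/\gvone\vT(\zeta_{\mn})$, the exponential part of the estimate produces $\big(\vect{r}^{\,\vphi_{\mn}}/\vphi_{\mn}^{\,\vphi_{\mn}}\big)^n (\gvphi_{\mn} R \vphi_{\mn})^m$ after the same Stirling manipulations as in Lemma~\ref{th:laplace_setting} — one must verify that $\zeta_{\mn} e^{R\vT} = \dots$ and $\log$-identities convert the combinatorial exponent into $e^{-n\ff_{\mn}(\vphi_{\mn})}$, i.e. that $\vphi_{\mn}$ is in fact the unique minimum of $\ff_{\mn}$ on $\onex$ for this range of $\mn$. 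The subexponential factor is where $C_{\mn,\vx}$ enters: the second derivative of the phase in the combined saddle-point contributes the Hessian determinant, and the factor $V(z)$ contributes $\det(\Id - \dT R)^{-1/2}$; collecting these and re-expressing everything in terms of $\vx = \vphi_{\mn}$ and $\mn$ — using $\dT R = \frac{2\mn}{\gvx R\vx}\dx R$ at the saddle — yields precisely the formula \eqref{eq:C} for $C_{\mn,\vx}$, including the scalar prefactor $\frac{1}{\mn}\big((1-\mn)\gvone(\Id - \tfrac{2\mn}{\gvx R\vx}\dx R)^{-1}\vx - 1\big)$ coming from the Jacobian of the change of saddle variables.

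The range $\mn \in\,]0,\alpha[$ with $\alpha = \frac12 \frac{\gvtau R\vtau}{\gvone\vtau}$ is exactly the set of $\mn$ for which \eqref{eq:UV_saddle_point} has a solution with $\zeta_{\mn} < \rho$: as $z \uparrow \rho$, $\vT(z) \to \vtau$ and the left side of the first equation in \eqref{eq:UV_saddle_point} increases to $\alpha$, so by monotonicity $\zeta_{\mn}$ exists and is unique for $\mn < \alpha$, staying in the analytic domain of $\vT$ away from its square-root singularity; this is what makes the saddle point a genuine interior one and keeps the error terms uniform on a closed subinterval. One also needs $C_{\mn,\vphi_{\mn}} \neq 0$ on this range so that the $-1/2$ power makes sense; I would argue this from the fact that the phase Hessian of a bona fide saddle-point integral is nondegenerate, equivalently from positive-definiteness of $\Hf$ at the minimum (Lemma~\ref{th:beta}), which for $\mn<\alpha$ one can check directly.

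The main obstacle I expect is the bookkeeping that identifies the saddle-point Hessian (a $q$-dimensional second-derivative computation mixing $z$ and $w$, with $V(z)$ contributing through $\det(\Id - \dT R)$) with the compact closed form $C_{\mn,\vx}$ in \eqref{eq:C}; in particular getting the scalar prefactor $\frac{1}{\mn}\big((1-\mn)\gvone(\cdot)^{-1}\vx - 1\big)$ right requires carefully tracking the Jacobian between the natural saddle variable $\zeta_{\mn}$ and the parametrisation by $\mn$, using $\frac{d}{d\zeta}\big[\tfrac12\gvT R\vT / \gvone\vT\big]$ and the relation $\zeta \vT' = (\Id - \dT R)^{-1}\vT$. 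A secondary, more routine difficulty is justifying uniformity of all the asymptotic estimates as $\mn$ ranges over a closed subinterval of $]0,\alpha[$, i.e. that $\zeta_{\mn}$ stays bounded away from both $0$ and $\rho$ and that $C_{\mn,\vphi_{\mn}}$ stays bounded away from $0$; both follow from continuity and the monotonicity argument above, but need to be stated cleanly for the $\sim$ to be legitimate.
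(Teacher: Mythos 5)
You have the right combinatorial decomposition and you even write down the correct saddle-point equation, but the analytic engine you propose does not run. A double saddle point in $(z,w)$ on $\exp\big(U(z,w)+V(z,w)\big)$ requires interior solutions of $z\partial_z(U+V)=n$ and $w\partial_w(U+V)=m$. Since $\vT$, and hence $U$ and $V$, stay \emph{bounded} at the square-root branch point $\rho$ (this is the content of the expansion $\vT(z)=\vtau-\vgamma\sqrt{1-z/\rho}+\cdots$), the quantities $z\partial_z U=\gvone\vT$ and $w\partial_w U=\gvone\vT-U$ are bounded by constants and can never equal $n$ or $m=\Theta(n)$. So there is no saddle where you are looking for one, and the $\mn$-dependent location $\zeta_{\mn}$ cannot emerge from that integral; for a bounded exponent the correct tool would be singularity analysis at $\rho$, which produces asymptotics of an entirely different shape.

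The missing idea, which is the crux of the paper's proof, is that the number of tree components is \emph{determined}: a tree on $p$ vertices has $p-1$ edges and a unicycle on $p$ vertices has $p$ edges, so an \nmR-graph made of trees and unicycles has exactly $n-m$ tree components. Hence
\[
  \UVnm \;=\; n!\,[z^n]\,\frac{U(z)^{n-m}}{(n-m)!}\,e^{V(z)},
\]
and the $[w^m]$ extraction you wanted to perform analytically is in fact exact and trivial (no extra factor $1/(2^m m!)$ is needed either, since the weights already carry the compensation factors). The large power $U(z)^{n-m}$ is what creates a genuine interior saddle: by the large-powers theorem \cite[Theorem~VIII.8]{FS09}, the saddle equation is $\zeta U'(\zeta)/U(\zeta)=n/(n-m)$, which, via $zU'(z)=\gvone\vT(z)$ and $U=\gvone\vT-\tfrac12\gvT R\vT$, is exactly the first relation of~\eqref{eq:UV_saddle_point}; monotonicity of its left-hand side up to the limit $\alpha$ at $z=\rho$ then gives existence and uniqueness of $\zeta_{\mn}$ for $\mn<\alpha$, as you correctly argue. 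Your remaining steps (the Stirling rearrangement into $(\vect{r}^{\,\vphi_{\mn}}/\vphi_{\mn}^{\,\vphi_{\mn}})^n(\gvphi_{\mn} R\vphi_{\mn})^m$ using $\vT(\zeta_{\mn})=\tfrac{2\mn}{\gvphi_{\mn} R\vphi_{\mn}}\vphi_{\mn}$, and the identification of the second-order factor with $C_{\mn,\vphi_{\mn}}$) are the same bookkeeping the paper performs, but they must be hung on this one-variable large-powers frame, not on a two-variable saddle that does not exist.
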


\begin{proof}
  A tree with $n$ vertices has $n-1$ edges,
  and a unicycle with $n$ vertices has $n$ edges.
  Therefore, an \nmR-graph that contains only trees and unicycles
  is a set of $n-m$ trees and a set of unicycles
  \[
    \UVnm =
    n! [z^n] \frac{U(z)^{n-m}}{(n-m)!} e^{V(z)}.
  \]
  We apply \cite[Theorem VIII.8]{FS09}
  to extract its asymptotics.
  The saddle-point equation is Equation~\eqref{eq:UV_saddle_point}.
  We then introduce the notation $\vphi$, which satisfies the relation
  \[
    \vT(\zeta_{\mn}) =
    \frac{2 \mn}{\gvphi_{\mn} R \vphi_{\mn}}
    \vphi_{\mn},
  \] 
  and apply Stirling approximations to rearrange the expression.
\end{proof}

In a longer version of this article,
we plan to enumerate connected \nmR-graphs
when $m-n \geq 1$ is fixed,
and to prove that such components
appear with a non-zero probability 
when $m = \alpha n + \bigO(n^{2/3})$.
This would extend to \Rr-graphs
the result obtained for graphs by \cite{JKLP93}
with $\alpha = \frac{1}{2}$.
Therefore, we conjecture that $\frac{m}{n} = \alpha$
is the threshold for the emergence of components 
with at least two cycles.
Following the approach of \cite{FKP89},
one could as well derive the limit law
of the number of edges when the first cycle appear
in a random \nmR-graph.

The following lemma  links the determinant of the Hessian matrix~$\Hfx$
to the value $C_{\mn, \vx}$. 

\begin{lemma} \label{th:detEHE_equal_C}
Let $\vtau$, $C_{\mn, \vx}$ and $\Hfx$
be defined by Equations~\eqref{eq:tau_rho_gamma},
\eqref{eq:C} and Lemma~\ref{th:beta}, and set $\alpha$ to $\frac{1}{2} \frac{\gvtau R \vtau}{\gvone \vtau}$,
then
for all $\mn \in [0,\alpha[$ and $\vx \in \onex$, the following identity holds
\[
  C_{\mn, \vx}
  =
  \det(\Hfx) \prod_{i=1}^q x_i.
\]
\end{lemma}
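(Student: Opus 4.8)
The plan is to prove the identity by a direct computation that reduces the determinant of the $(q-1)\times(q-1)$ Hessian $\Hfx = \transp{E}\, M\, E$ to the quantities appearing in $C_{\mn,\vx}$, where
\[
  M = \dx^{-1} + \frac{2\mn}{\gvx R \vx}\Big( \frac{2}{\gvx R \vx} R\vx\,\gvx R - R \Big).
\]
The first step is to recognize that $M$ is a rank-one perturbation of the matrix $N = \dx^{-1} - \frac{2\mn}{\gvx R \vx} R$, namely $M = N + \frac{4\mn}{(\gvx R \vx)^2}\,(R\vx)(\gvx R)$, and that $\transp{E}\,(\cdot)\,E$ on a matrix $P$ picks out, up to a scalar, the determinant restricted to the hyperplane orthogonal to $\gvone$. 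The classical identity I would invoke here is the ``bordered determinant'' formula: for a $q\times q$ matrix $P$, $\det(\transp{E} P E) = \det(P + \gvone^\intercal \vone \cdot (\text{suitable outer product}))/(\ldots)$, or more cleanly, $\det(\transp{E} P E)\cdot \trace(\text{something}) = \det\big(\begin{smallmatrix} P & \vone \\ \gvone & 0 \end{smallmatrix}\big)$ up to sign; the precise bookkeeping is to express $\det(\transp{E} P E)$ via the adjugate, $\det(\transp{E} P E) = \gvone\, \adj(P)\, \vone$ when $E$ is the specific matrix displayed in the paper (this is a standard fact about the Laplacian-type reduction, since the left-kernel of $\transp{E}$ is spanned by $\gvone$).

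The second step is then to compute $\gvone\,\adj(M)\,\vone$. Since $\adj$ interacts well with rank-one updates via the matrix-determinant lemma, I would write $\det(M - \lambda \vone\gvone)$ as an affine function of $\lambda$ and read off both $\det(M)$ and $\gvone\,\adj(M)\,\vone$ from its constant term and its $\lambda$-coefficient. Concretely, $\det(M + t\,\vone\gvone) = \det(M) + t\,\gvone\,\adj(M)\,\vone$, so it suffices to evaluate $\det(M + t\,\vone\gvone)$ for the specific $M$ above. Both $M$ and the perturbation $t\,\vone\gvone$ are themselves built out of $\dx^{-1}$, $R$, and rank-one terms, so repeated application of the matrix-determinant lemma (Sherman–Morrison for the determinant) reduces everything to $\det\big(\dx^{-1} - \frac{2\mn}{\gvx R\vx} R\big) = \det(\dx)^{-1}\det\big(\Id - \frac{2\mn}{\gvx R\vx}\dx R\big)$ times scalar correction factors involving $\gvone\big(\Id - \frac{2\mn}{\gvx R\vx}\dx R\big)^{-1}\vx$ and similar bilinear forms. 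After collecting terms and using $\gvx R\vx$ to clear denominators, the answer should organize itself into exactly the shape of $C_{\mn,\vx}$ in Equation~\eqref{eq:C}, with the factor $\prod_i x_i = \det(\dx)$ emerging precisely from converting $\det(\dx^{-1})$ in the denominator; hence the claimed identity $C_{\mn,\vx} = \det(\Hfx)\prod_i x_i$.

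I expect the main obstacle to be the bookkeeping in the second step: there are several rank-one terms (the $(R\vx)(\gvx R)$ piece inside $M$, and the $\vone\gvone$ perturbation, and implicitly $\gvx R$ versus $\gvx$ acting on different vectors), and the matrix-determinant lemma must be applied in the right order while keeping track of which inverse appears where. The cleanest route is probably to factor out $\dx^{1/2}$ on both sides to symmetrize, set $B = \dx^{1/2} R \dx^{1/2}$ and $\vy = \dx^{1/2}\vone$ (or $\dx^{-1/2}\vx = \dx^{1/2}\vone$, noting $\vx = \dx\vone$), so that all the relevant bilinear forms become expressed through $(\Id - \mu B)^{-1}$ for $\mu = 2\mn/(\gvx R\vx)$ and through $\vy$; the quantity $\gvx R\vx = \gvy B \vy$ and $\gvone\,\cdots\,\vx$ translates to $\gvy(\Id-\mu B)^{-1}\vy$, matching the structure of $C_{\mn,\vx}$ after substitution. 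The role of the hypothesis $\mn \in [0,\alpha[$ is to guarantee invertibility of $\Id - \mu\dx R$ (equivalently that $\mu$ is below the relevant spectral radius), which is what makes all these inverses well-defined; at $\mn = 0$ both sides reduce to $\det(\transp{E}\dx^{-1}E)\prod_i x_i$, which one checks equals $\gvone\,\adj(\dx^{-1})\,\vone \cdot \prod_i x_i = \sum_i \prod_{j\neq i} x_j^{-1} \cdot \prod_i x_i = \sum_i x_i = 1 = C_{0,\vx}$, providing a useful sanity check on normalization before grinding through the general case.
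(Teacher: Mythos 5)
Your proposal is correct, and the computation does close up as planned: with $\mu = 2\mn/(\gvx R\vx)$ and $S = \gvone(\Id-\mu\dx R)^{-1}\vx$, your two applications of the matrix-determinant lemma give
$\det(\Hfx) = \frac{1}{\mn}\bigl((1-\mn)S-1\bigr)\det(\dx)^{-1}\det(\Id-\mu\dx R)$,
which is exactly $C_{\mn,\vx}/\prod_i x_i$. Your route differs from the paper's in how the rank-one term of the Hessian is handled. The paper sets $M = \frac{\gvx R\vx}{2\mn}\dx\phantom{}^{-1} - R$ and $\vect{v} = \sqrt{2/(\gvx R\vx)}\,\vx$, borders $E$ with the column $\vect{v}$ to form a square matrix $F$, and evaluates $\det(\transp{F}MF)$ both as $\det(F)^2\det(M)$ and by a block-determinant expansion; the point of this particular choice is that $\transp{E}(M + M\vect{v}\,\gvect{v}M)E = \frac{\gvx R\vx}{2\mn}\Hfx$ (the $\vone$-components of $M\vect{v}$ are annihilated by $\transp{E}$ since $\gvone E = \gvzero$), so the rank-one piece and the reduction from $q\times q$ to $(q-1)\times(q-1)$ are dispatched by a single identity. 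You instead separate the two steps: the adjugate identity $\det(\transp{E}PE) = \gvone\adj(P)\vone$ — which the paper also uses, in the equivalent form $\det(\transp{E}ME)=\det(M)\,\gvone M^{-1}\vone$ — followed by Sherman--Morrison for the rank-one update inside the Hessian. Your version is more mechanical and needs no inspired choice of bordering vector, at the cost of longer bookkeeping; the paper's is shorter but the choice of $\vect{v}$ is not obvious a priori. Your $\mn=0$ normalization check is right ($C_{0,\vx}=1$ as a limit). One sentence worth adding in a write-up: the intermediate steps require $N$ and $M$ to be invertible, which holds only on a dense open set of parameters, but both sides of the final identity are polynomial after clearing denominators, so the identity extends to all of $[0,\alpha[$ and all $\vx$ in the interior of $\onex$ by continuity.
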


\begin{proof}
  We introduce the matrix $M = \frac{\gvx R \vx}{2 \mn} \dx \phantom{}^{-1} - R$
  and the vector $\vect{v} = \sqrt{ 2 / (\gvx R \vx) } \vx$.
  Since $E$ is a $q \times (q-1)$ matrix, 
  it becomes a square matrix $F$ if we concatenate to its right
  the column vector $\vect{v}$.
  The determinant of $\transp{F} M F$ can be expressed as $\det(F)^2 \det(M)$
  or using a block-determinant formula.
  The equality between those two expressions is
  \begin{equation} \label{eq:detEHE_equal_C}
    \frac{2}{\gvx R \vx} \det(M)
    =
    (\gvect{v} M \vect{v} + 1)
    \det(\transp{E} M E)
    -
    \det(\transp{E}(M + M \vect{v}\, \gvect{v} M) E).
  \end{equation}
  The properties of the matrix $E$ imply
  \[
    \det(\transp{E} M E) = \det(M) \gvone M^{-1} \vone
    \quad \text{and } \quad
    \transp{E}(M + M \vect{v}\, \gvect{v} M) E = \frac{\gvx R \vx}{2 \mn} \Hfx.
  \]
  The result is obtained by injection of those relations in Equation~\eqref{eq:detEHE_equal_C}
  and rearrangement of the terms.
\end{proof}

    \section{Global enumeration when $\ff_{\mn}$ has a unique minimum} \label{sec:unique_min}

In this section, we present two cases where
the result of Theorem~\ref{th:global}
can be made more specific.

\begin{theorem} \label{th:small_mn} 
Let $\beta$ denote the value 
$\sup \{ c\ |\ \forall \vx \in \onex,\ \det(\Hfx) > 0 \}$,
then $\beta$ is greater than $\alpha$.
The equation $\gradf = \gvzero$ defines implicitly on $[0, \beta[$ 
a unique solution $\vphi_{\mn}$,
given by Equation~\eqref{eq:UV_saddle_point}
when $\mn \in ]0,\alpha]$.
Finally, when $\frac{m}{n}$ is in a compact interval of $]0, \beta[$, 
the asymptotics of \nmR-graphs is
\[
  \GRnm
  \sim
  \frac{n^{2m}}{2^m m!}
  \left( 
  \frac{ \vect{r}^{\, \vphi_{\mn} }}
  { \vphi_{\mn}^{\, \vphi_{\mn} }}
  \right)^n
  \frac{ \left( \gvphi_{\mn} R \vphi_{\mn} \right)^m }
  { \sqrt{\det(\Hf) \det(\dphi_{\mn})} }.
\]
\end{theorem}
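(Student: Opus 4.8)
\emph{Strategy and the unique minimiser.}
The stated formula will come out of Theorem~\ref{th:global} once we know that, for $\mn\in\,]0,\beta[$, the function $\ff_{\mn}$ has a single minimum $\vphi_{\mn}$, at which $\det(\Hf)\neq0$. First, at $\mn=0$ Lemma~\ref{th:beta} gives $\Hfx=\transp{E}\dx^{-1}E$, which is positive definite at every interior point of $\onex$; so $\det(\Hfx)>0$ there and $\beta>0$. For $\mn\in[0,\beta[$, by definition of $\beta$, the determinant of $\Hfx$ never vanishes on the connected set $[0,\beta[\times(\mathrm{int}\,\onex)$, so the eigenvalues of this continuous family of symmetric matrices keep the sign they have at $\mn=0$: $\Hfx\succ0$ throughout, i.e. $\ff_{\mn}$ is strictly convex on $\onex$ for every $\mn<\beta$. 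Next, the first two summands of $\ff_{\mn}$ extend continuously to the closed simplex and $-\mn\log(\gvx R\vx)\to+\infty$ when $\gvx R\vx\to0$, so $\ff_{\mn}$ attains its infimum on the compact set $\onex$ at a point where $\gvx R\vx>0$; by Lemma~\ref{th:beta} this point is interior and cancels $\gradfx$, and strict convexity makes it the unique critical point $\vphi_{\mn}$. For $\mn\in\,]0,\alpha]$ the vector $\vphi_{\mn}$ of \eqref{eq:UV_saddle_point} is this minimiser: inserting $\gvT(\zeta_{\mn})=\frac{2\mn}{\gvphi_{\mn}R\vphi_{\mn}}\gvphi_{\mn}$ reduces $\gradfx$ at $\vx=\vphi_{\mn}$ to $\left(\log\gvT(\zeta_{\mn})-\log\gvect{r}-\gvT(\zeta_{\mn})R\right)E$ modulo a multiple of $\gvone E=\gvzero$, and the tree equation $\vT(\zeta_{\mn})=\zeta_{\mn}\diag{r}\exp(R\vT(\zeta_{\mn}))$ shows the bracket equals $\log(\zeta_{\mn})\gvone$, hence is killed by $E$ (the case $\mn=\alpha$ is the limit $\zeta_{\mn}\to\rho$, $\vphi_{\mn}\to\vtau/\gvone\vtau$).

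\emph{The inequality $\beta>\alpha$.}
By Lemma~\ref{th:detEHE_equal_C}, for $\mn<\alpha$ the sign of $\det(\Hfx)$ is that of $C_{\mn,\vx}$, and rewriting \eqref{eq:C} with the adjugate,
\[
  C_{\mn,\vx}=\frac{1}{\mn}\left((1-\mn)\,\gvone\,\adj\!\left(\Id-\frac{2\mn}{\gvx R\vx}\dx R\right)\vx-\det\!\left(\Id-\frac{2\mn}{\gvx R\vx}\dx R\right)\right),
\]
which is continuous in $(\mn,\vx)$ up to $\mn=\alpha$ even though $\Id-\frac{2\mn}{\gvx R\vx}\dx R$ may degenerate. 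At $\mn=0$ one has $C_{0,\vx}=\det(\transp{E}\dx^{-1}E)\prod_i x_i>0$. At $\mn=\alpha$ the matrix $\Id-\frac{2\alpha}{\gvx R\vx}\dx R$ is singular only at $\vx=\vtau/\gvone\vtau$, where it equals $\Id-\diag{\tau}R$; its adjugate is then the entrywise positive rank-one matrix proportional to $\vgamma\,\gvgamma\,\diag{\tau}^{-1}$ (by \eqref{eq:tau_rho_gamma} and Perron--Frobenius), so $\gvone\,\adj(\cdot)\,\vx>0$ while $\det(\cdot)=0$, giving $C_{\alpha,\vx}>0$ using $1-\alpha>0$. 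Granting that $C_{\mn,\vx}$ stays positive on all of $[0,\alpha]\times(\mathrm{int}\,\onex)$ — by following it along \eqref{eq:C} from $\mn=0$ — continuity of $(\mn,\vx)\mapsto\det(\Hfx)$ and a compactness argument near $\partial\onex$ yield $\det(\Hfx)>0$ on $[0,\alpha+\varepsilon]\times\onex$ for some $\varepsilon>0$, i.e. $\beta\geq\alpha+\varepsilon>\alpha$.

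\emph{Conclusion and main obstacle.}
Fix a compact interval $[a,b]\subset\,]0,\beta[$. Then $(\mn,\vx)\mapsto\det(\Hfx)$ does not vanish on $[a,b]\times\onex$, and by the previous paragraphs $\ff_{\mn}$ has the single local minimum $\vphi_{\mn}$; Theorem~\ref{th:global} with $s=1$ then gives exactly the announced asymptotics, since $\det(\diag{\varphi_{\mn}})$ is the product of the coordinates of $\vphi_{\mn}$. The delicate point — and the place where I expect all the work to be — is the positivity of $C_{\mn,\vx}$, equivalently of $\det(\Hfx)$, for \emph{every} $\vx\in\onex$ and \emph{every} $\mn\leq\alpha$, not merely at the minimiser $\vphi_{\mn}$ (for which it already follows from the positive asymptotics of Theorem~\ref{th:UV}). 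Establishing this requires controlling the full family \eqref{eq:C}, in particular across the locus where $\Id-\frac{2\mn}{\gvx R\vx}\dx R$ ceases to be a nonsingular M-matrix, using the Perron--Frobenius structure of $\diag{x}R$ and the characterisation \eqref{eq:tau_rho_gamma} of $\rho,\vtau,\vgamma$ at the singularity of $\vT(z)$.
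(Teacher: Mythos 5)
Your overall architecture coincides with the paper's: positive-definiteness of $\Hfx$ at $\mn=0$, propagation by continuity of the eigenvalues to all of $[0,\beta[\,\times\,\onex$, strict convexity giving a unique interior minimum that cancels the gradient, verification that the $\vphi_{\mn}$ of \eqref{eq:UV_saddle_point} is that critical point, and finally Theorem~\ref{th:global} with $s=1$. All of that matches the paper and is fine.

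The gap is exactly where you flag it yourself: you never prove that $C_{\mn,\vx}>0$ (equivalently, by Lemma~\ref{th:detEHE_equal_C}, that $\det(\Hfx)>0$) for \emph{every} $\vx\in\onex$ and every $\mn\le\alpha$; you write ``granting that $C_{\mn,\vx}$ stays positive'' and defer the work. Since $\beta$ is defined by a condition quantified over all of $\onex$, this positivity \emph{is} the content of $\beta>\alpha$, so the proposal as written does not establish that claim. (Your auxiliary assertion that $\Id-\frac{2\alpha}{\gvx R\vx}\dx R$ is singular only at $\vx=\vtau/(\gvone\vtau)$ is also unjustified: singularity means that $\frac{\gvx R\vx}{2\alpha}$ is an eigenvalue of $\dx R$, which can occur along a whole hypersurface of $\onex$.) The paper closes the gap with a short device you are missing: $\Hfx$ does not depend on $\vect{r}$, whereas the critical-point condition only requires $\log(\gvx)-\log(\gvect{r})-\frac{2\mn}{\gvx R\vx}\gvx R$ to be collinear with $\gvone$, and this can be solved for $\log(\gvect{r})$ given any interior $\vx$. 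Hence every interior point of $\onex$ is the minimiser $\vphi_{\mn}$ for a suitable choice of vertex weights, and the positivity $C_{\mn,\vphi_{\mn}}>0$ --- which, as you correctly note, does follow from the positive asymptotics of Theorem~\ref{th:UV} --- transfers to every point of the simplex, giving $\beta\ge\alpha$. The strict inequality then comes from the separate computation at $\mn=\alpha$, where $C_{\alpha,\vphi_{\alpha}}=\frac{1-\alpha}{\alpha}\,\gvone\,\adj(\Id-\dtau R)\,\frac{\vtau}{\gvone\vtau}>0$ by the Perron--Fr\"obenius structure of $\dtau R$, combined again with the same $\vect{r}$-independence argument and a compactness step. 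Without this device the central claim $\beta>\alpha$ remains unproved.
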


\begin{proof}
When $\mn = 0$, for all $\vx \in \onex$
the matrix $\mathcal{H}_{\ff_0(\vx)}$ is positive-definite.
By continuity of its eigenvalues with respect to $\mn$ and $\vx$,
$\Hfx$ stays positive-definite for all $\mn \in [0, \beta[$ and $\vx \in \onex$,
so the function $\ff_{\mn}$ is convex.
According to Lemma~\ref{th:beta},
$\ff_{\mn}$ has no minimum on the boundary of $\onex$,
so it has a unique minimum, which cancels its gradient $\gradfx$.
The asymptotics is then a consequence of Theorem~\ref{th:global}.
For $\mn \in ]0, \alpha[$, let us define
$\vphi_{\mn}$ as in Equation~\eqref{eq:UV_saddle_point}.
A direct computation shows that 
$\log(\gvphi_{\mn}) - \log(\gvect{r}) - 2 \mn \gvphi R / (\gvphi R \vphi)$
is collinear to $\gvone$, so $\vphi_{\mn}$ cancels $\gradfx$.
We extend continuously  $\vphi_{\mn}$ for $\mn = 0$ and $\mn = \alpha$ with
\[
  \vphi_0 
  = \lim_{z \to 0} \frac{\vT(z)}{\gvone \vT(z)}
  = \frac{\vect{r}}{\gvone \vect{r}},
  \qquad
  \vphi_{\alpha}
  = \lim_{z \to \rho} \frac{\vT(z)}{\gvone \vT(z)}
  = \frac{\vtau}{\gvone \vtau}.
\]
The last point we need to prove is that $\beta > \alpha$.
According to Lemma~\ref{th:detEHE_equal_C}, 
$\det(\Hfx)$ vanishes only if $C_{\mn, \vx}$ does,
and Theorem~\ref{th:UV} implies in particular $C_{\mn, \vphi_{\mn}} > 0$
when $\mn \in ]0, \alpha[$.
Observe that $\Hfx$ is independent of $\vect{r}$
and that for each $\vx \in \onex$,
there is a vector $\vect{r} \in \reals_{>0}^q$ such that $\vphi_{\mn} = \vx$.
This proves $\beta \geq \alpha$.
For $\mn = \alpha$,
$\vphi_{\alpha}$ is equal to $\vtau / (\gvone \vtau)$.
The definitions~\eqref{eq:tau_rho_gamma} of $\vtau$ 
and~\eqref{eq:C} of $C_{\mn, \vphi_{\mn}}$ then imply
\[
  \det \left( \Id - \frac{2 \alpha}{\gvphi_{\alpha} R \vphi_{\alpha}} \dphi_{\alpha} R \right) 
  = 0
  \quad \text{and } \quad
  C_{\alpha, \vphi_{\alpha}}
    =
    \frac{1-\alpha}{\alpha} \gvone \adj(\Id - \dtau R) \frac{\vtau}{\gvone \vtau}.
\]
The value of the generating function of unrooted \Rr-trees
$\gvone \vT(z) - \gvT(z) R \vT(z) / 2$
is positive at its dominant singularity $\rho$,
and $\vtau = \vT(\rho)$,
so $\alpha = \gvtau R \vtau / (2 \gvone \vtau)$ 
is smaller than $1$.
By definition of $\vtau$, the irreducible matrix $\dtau R$
has dominant eigenvalue $1$ of dimension $1$ with eigenvector $\vgamma$, 
defined in Equation~\eqref{eq:tau_rho_gamma}.
Therefore, $\adj(\Id - \vtau R)$ is proportional to $\vgamma\, \gvgamma$,
which has positive coefficients.
This implies $C_{\alpha, \vphi_{\alpha}} > 0$, 
so $\det(\mathcal{H}_{\ff_{\alpha}(\vx)})$
is positive for all $\vx$ in $\onex$ and $\ff_{\alpha}$ is still strictly convex.
Therefore, $\beta$ is greater than $\alpha$.
\end{proof}


The following corollary is obtained by comparison
of the asymptotics of $\UVnm$ from Theorem~\ref{th:UV}
and $\GRnm$ from Theorem~\ref{th:small_mn},
using the relation 
$C_{\mn, \vx} = \det(\Hfx) \prod_{i=1}^q x_i$ 
from Lemma~\ref{th:detEHE_equal_C}.

\begin{corollary}
  When $c = \frac{m}{n}$ is in a closed interval of $]0, \alpha[$,
  then almost all \nmR-graphs contain only trees and unicycles.
\end{corollary}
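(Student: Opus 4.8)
The plan is to compare term by term the two asymptotic estimates already established: the count $\GRnm$ of all \nmR-graphs from Theorem~\ref{th:small_mn}, and the count $\UVnm$ of those containing only trees and unicycles from Theorem~\ref{th:UV}. Both hold when $\mn = \frac{m}{n}$ lies in a compact subinterval of $]0,\alpha[$ (note $\alpha < \beta$, so Theorem~\ref{th:small_mn} applies throughout $]0,\alpha[$), and both are governed by the \emph{same} point $\vphi_{\mn}$: in Theorem~\ref{th:small_mn} it is the unique minimiser of $\ff_{\mn}$, and the proof of that theorem shows that for $\mn \in ]0,\alpha[$ this minimiser coincides with the quantity $\vphi_{\mn} = \vT(\zeta_{\mn})/(\gvone\vT(\zeta_{\mn}))$ appearing in Theorem~\ref{th:UV}. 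So the exponential factors $\bigl(\vect{r}^{\,\vphi_{\mn}}/\vphi_{\mn}^{\,\vphi_{\mn}}\bigr)^n$ and the polynomial-in-$R$ factors $(\gvphi_{\mn} R \vphi_{\mn})^m$, as well as the common prefactor $\frac{n^{2m}}{2^m m!}$, are literally identical in the two formulas.

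Next I would reconcile the remaining constant factors. Theorem~\ref{th:small_mn} carries $1/\sqrt{\det(\Hf)\det(\dphi_{\mn})} = \bigl(\det(\Hf)\prod_{i=1}^q (\varphi_{\mn})_i\bigr)^{-1/2}$, while Theorem~\ref{th:UV} carries $C_{\mn,\vphi_{\mn}}^{-1/2}$. By Lemma~\ref{th:detEHE_equal_C}, applied at $\vx = \vphi_{\mn} \in \onex$ and $\mn \in [0,\alpha[$, we have exactly $C_{\mn,\vphi_{\mn}} = \det(\Hf)\prod_{i=1}^q (\varphi_{\mn})_i$, so these two factors agree as well. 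Hence
\[
  \UVnm \sim \GRnm
\]
when $\frac{m}{n}$ is in a closed subinterval of $]0,\alpha[$. Since the trees-and-unicycles graphs form a subfamily of all \nmR-graphs and all weights are nonnegative, $0 \le \UVnm \le \GRnm$, so the ratio $\UVnm/\GRnm \to 1$; equivalently, the total weight of \nmR-graphs possessing a component with at least two cycles is $\smallo(\GRnm)$. Interpreting the normalised weights as a probability measure on \nmR-graphs (as set up by the ``number of \Rr-graphs'' convention), this says almost all \nmR-graphs contain only trees and unicycles.

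The one point requiring a word of care — and the only place where anything is really used rather than quoted — is checking that the $\vphi_{\mn}$ of Theorem~\ref{th:UV} is the same object as the $\vphi_{\mn}$ of Theorem~\ref{th:small_mn}, and that the latter is genuinely in the interior of $\onex$ so that Lemma~\ref{th:detEHE_equal_C} and the Hessian formula of Lemma~\ref{th:beta} apply; but both facts are already recorded inside the proof of Theorem~\ref{th:small_mn}, where it is shown that $\vphi_{\mn}$ from Equation~\eqref{eq:UV_saddle_point} cancels $\gradfx$ and, by convexity of $\ff_{\mn}$ on $[0,\beta[$, is the unique minimiser. So there is no genuine obstacle here: the corollary is purely a matching of the constants produced by the two Laplace/saddle-point analyses, glued together by the algebraic identity of Lemma~\ref{th:detEHE_equal_C}. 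The mild subtlety, rather than a difficulty, is simply to be explicit that all three families of factors (the $\frac{n^{2m}}{2^m m!}$ prefactor, the $\bigl(\vect{r}^{\,\vphi_{\mn}}/\vphi_{\mn}^{\,\vphi_{\mn}}\bigr)^n (\gvphi_{\mn} R \vphi_{\mn})^m$ bulk, and the $\det(\Hf)^{-1/2}$-type constant) coincide, which forces the ratio to tend to $1$.
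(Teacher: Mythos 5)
Your proposal is correct and follows exactly the paper's own route: the corollary is stated there as obtained ``by comparison of the asymptotics of $\UVnm$ from Theorem~\ref{th:UV} and $\GRnm$ from Theorem~\ref{th:small_mn}, using the relation $C_{\mn, \vx} = \det(\Hfx) \prod_{i=1}^q x_i$ from Lemma~\ref{th:detEHE_equal_C}.'' Your additional care in checking that the two occurrences of $\vphi_{\mn}$ coincide is a worthwhile explicit remark, but it is the same argument.
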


When $R$ is the adjacency matrix of a regular graph,
then $\vone$ is an eigenvector and $\vphi_{\mn}$ becomes explicit.

\begin{theorem} \label{th:eigenvectorone}
  Let $\lambda_1 \geq \cdots \geq \lambda_q$ denote the eigenvalues of $R$,
  and assume that $\vone$ is an eigenvector of $R$,
  then 
  when $\frac{m}{n}$ is in a compact interval of $]0, \beta[$,
  the asymptotics of $(n,m)$-$(R,\vone)$-graphs is
  \[
    G_{R, \vone}(n,m)
    \sim
    \frac{n^{2m}}{2^m m!}
    \lambda_1^m
    q^{n-m}
    \prod_{i=2}^q
    \left( 1 - 2 \frac{\lambda_i}{\lambda_1} \frac{m}{n} \right)^{-1/2}.
  \]
  If $\lambda_2$ is positive, $\beta < \frac{\lambda_1}{2\lambda_2}$,
  otherwise the previous asymptotics
  holds for $\frac{m}{n}$ in any compact interval of $\reals_{> 0}$.
\end{theorem}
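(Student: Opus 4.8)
The plan is to specialize Theorem~\ref{th:small_mn} to the case where $\vone$ is an eigenvector of $R$, say with eigenvalue $\lambda_1$. First I would observe that since $R$ is irreducible with non-negative coefficients and $\vone > 0$ is an eigenvector, the Perron--Frobenius theorem forces $\lambda_1$ to be the dominant eigenvalue, so the notation $\lambda_1 \geq \cdots \geq \lambda_q$ is consistent. The key simplification is that the unique minimum $\vphi_{\mn}$ of $\ff_{\mn}$ is the uniform vector: I would verify that $\vx = \frac{1}{q}\vone$ cancels the gradient $\gradfx$ from Lemma~\ref{th:beta}. Indeed, at this point $\log(\gvx) - \log(\gvone) = -\log(q)\gvone$ is collinear to $\gvone$, and $\gvx R = \frac{1}{q}\gvone R = \frac{\lambda_1}{q}\gvone$ (using $R\vone = \lambda_1\vone$ and symmetry of $R$), which is also collinear to $\gvone$; since $E$ has left-kernel spanned by $\gvone$, the whole gradient vanishes. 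Hence $\vphi_{\mn} = \frac{1}{q}\vone$ for every $\mn \in ]0,\beta[$.

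Next I would substitute this explicit minimum into the asymptotic formula of Theorem~\ref{th:small_mn}. The prefactor becomes $\left(\vect{r}^{\,\vphi_{\mn}} / \vphi_{\mn}^{\,\vphi_{\mn}}\right)^n = \left(1^{\,\vphi_{\mn}} / (1/q)^{\,1/q \cdot \vone}\right)^n = q^n$ since $\vect{r} = \vone$ and $\prod_i (1/q)^{1/q} = 1/q$. The edge factor is $(\gvphi_{\mn} R \vphi_{\mn})^m = (\lambda_1/q)^m$, contributing $\lambda_1^m q^{-m}$. Combining $q^n \cdot q^{-m} = q^{n-m}$ gives the stated shape, modulo the Hessian determinant term $\sqrt{\det(\Hf)\det(\dphi_{\mn})}$. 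For the determinant: $\det(\dphi_{\mn}) = \prod_i \varphi_i = q^{-q}$, and by Lemma~\ref{th:detEHE_equal_C} the product $\det(\Hf)\prod_i \varphi_i$ equals $C_{\mn, \vphi_{\mn}}$. So the remaining task is to evaluate $C_{\mn, \frac1q\vone}$ from Equation~\eqref{eq:C} and show it equals $\prod_{i=2}^q\left(1 - 2\frac{\lambda_i}{\lambda_1}\frac{m}{n}\right)$, which will give the product factor in the theorem after taking the reciprocal square root.

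To compute $C_{\mn, \vx}$ at $\vx = \frac1q\vone$: here $\gvx R \vx = \lambda_1/q$, so $\frac{2\mn}{\gvx R \vx}\dx R = \frac{2\mn q}{\lambda_1}\cdot\frac1q R = \frac{2\mn}{\lambda_1} R$. Diagonalizing $R$ in an orthogonal eigenbasis, $\Id - \frac{2\mn}{\lambda_1} R$ has eigenvalues $1 - 2\frac{\lambda_i}{\lambda_1}\mn$, the first of which (for $\lambda_1$) is $1 - 2\mn$. Thus $\det(\Id - \frac{2\mn}{\lambda_1}R) = (1-2\mn)\prod_{i=2}^q(1 - 2\frac{\lambda_i}{\lambda_1}\mn)$. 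For the bracketed factor in~\eqref{eq:C}, I would use that $\vone$ is the $\lambda_1$-eigenvector to get $\gvone\left(\Id - \frac{2\mn}{\lambda_1}R\right)^{-1}\vx = \frac1q\gvone\left(\Id - \frac{2\mn}{\lambda_1}R\right)^{-1}\vone = \frac1q \cdot \frac{q}{1-2\mn} = \frac{1}{1-2\mn}$, so $(1-\mn)\cdot\frac{1}{1-2\mn} - 1 = \frac{\mn}{1-2\mn}$. Multiplying everything: $C_{\mn,\vx} = \frac{1}{\mn}\cdot\frac{\mn}{1-2\mn}\cdot(1-2\mn)\prod_{i=2}^q(1-2\frac{\lambda_i}{\lambda_1}\mn) = \prod_{i=2}^q(1 - 2\frac{\lambda_i}{\lambda_1}\mn)$, as wanted. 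Then $\sqrt{\det(\Hf)\det(\dphi_{\mn})} = \sqrt{C_{\mn,\vphi_{\mn}}} = \prod_{i=2}^q(1-2\frac{\lambda_i}{\lambda_1}\mn)^{1/2}$, and the theorem's formula follows.

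Finally, for the range of validity, $\beta = \sup\{\mn\ |\ \forall\vx\in\onex,\ \det(\Hfx)>0\}$ from Theorem~\ref{th:small_mn}; at $\vx = \frac1q\vone$ the condition $\det(\Hfx)>0$ is equivalent to $C_{\mn,\frac1q\vone} = \prod_{i=2}^q(1-2\frac{\lambda_i}{\lambda_1}\mn)>0$, which fails first at $\mn = \frac{\lambda_1}{2\lambda_2}$ if $\lambda_2>0$ (and never fails if $\lambda_2 \leq 0$, since then all $\lambda_i \leq 0$ for $i\geq 2$ and each factor stays $\geq 1$). This gives $\beta < \frac{\lambda_1}{2\lambda_2}$ when $\lambda_2>0$, and otherwise the asymptotics holds on all compact subintervals of $\reals_{>0}$. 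The main obstacle I anticipate is purely bookkeeping: carefully pushing the explicit $\vphi_{\mn} = \frac1q\vone$ through every factor of the Theorem~\ref{th:small_mn} formula without sign or exponent slips, and confirming the $C_{\mn,\vx}$ computation above; there is no conceptual difficulty once the minimizer is identified as uniform.
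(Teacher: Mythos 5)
Your identification of the minimizer $\vphi_{\mn}=\vone/q$, the prefactor bookkeeping, and the evaluation of the Hessian term all check out and match the paper's computation; your detour through Lemma~\ref{th:detEHE_equal_C} and $C_{\mn,\vone/q}$ is a legitimate alternative to the paper's direct evaluation of $\det(\mathcal{H}_{\ff_{\mn}(\vone/q)})=\tfrac{q^q}{1-2c}\det(\Id-\tfrac{2c}{\lambda_1}R)$ (just note that the lemma is stated for $\mn\in[0,\alpha[$ while you need the identity up to $\beta$; since it is a purely algebraic identity this is harmless, but it deserves a remark).

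The genuine gap is in the case $\lambda_2\leq 0$. You argue that the asymptotics extends to all compact intervals of $\reals_{>0}$ because $C_{\mn,\frac1q\vone}=\prod_{i\geq 2}(1-2\tfrac{\lambda_i}{\lambda_1}\mn)$ never vanishes. But $\beta$ is defined as $\sup\{c\ |\ \forall\vx\in\onex,\ \det(\Hfx)>0\}$: positivity must hold \emph{uniformly over all of $\onex$}, not merely at the single point $\vone/q$. Checking the determinant at the minimizer tells you nothing about whether $\ff_{\mn}$ stays convex elsewhere, nor whether $\vone/q$ remains the unique \emph{global} minimum for large $\mn$ --- and without that, neither Theorem~\ref{th:small_mn} nor Theorem~\ref{th:global} applies beyond $\beta$. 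The paper closes this with a separate global argument: writing $\ff_{\mn}(\vx)=\log(\gvx)\vx-\mn\log(\gvx R\vx)$, the first term is uniquely minimized at $\vone/q$, and an orthogonal diagonalization of $R$ with first eigenvector $\vone/\sqrt q$ gives $\gvx R\vx=\tfrac{\lambda_1}{q}+\sum_{i=2}^q\lambda_i z_i^2$, which is globally maximized at $\vx=\vone/q$ precisely when $\lambda_2\leq 0$. Hence $\ff_{\mn}$ has a unique global minimum at $\vone/q$ for every $\mn>0$, which is the ingredient your proof is missing. (A smaller quibble: in the case $\lambda_2>0$ your observation only yields $\beta\leq\tfrac{\lambda_1}{2\lambda_2}$ rather than the strict inequality, but the paper's own proof is no more precise on this point.)
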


\begin{proof}
$R$ has non-negative coefficients and is irreducible.
The Perron-Fr\"obenius theorem implies that its dominant eigenvalue
is positive and corresponds to the unique eigenvector with positive coefficients,
thus $R \vone = \lambda_1 \vone$.
The first assertion of the theorem is then a consequence of Theorem~\ref{th:small_mn}
with $\vphi_{\mn} = \vone / q$,
\[
  \gvect{\scriptstyle{\nabla}}_{\ff_{\mn}(\vone / q)} = \gvzero
  \quad \text{and } \quad
  \det(\mathcal{H}_{\ff_{\mn}(\vone/q)})
  =
  \frac{q^q}{1-2c}
  \det \left( \Id - \frac{2c}{\lambda_1} R \right)
  =
  q^q
  \prod_{i=2}^q
  \left( 1 - 2 c \frac{\lambda_i}{\lambda_1} \right).
\]
Observe that $\det(\mathcal{H}_{\ff_{\mn}(\vone/q)})$ vanishes at $\mn = \frac{\lambda_1}{2 \lambda_2}$.
Let us now consider the case $\lambda_2 \leq 0$.
The function $\log(\gvx) \vx$ 
reaches its unique minimum on $\onex$ at $\vone / q$.
To prove that the function
$\ff_{\mn}(\vx) = \log(\gvx) \vx - \mn \log(\gvx R \vx)$
does the same, it is then sufficient to prove that
$\gvx R \vx$ reaches at $\vone/q$ its global maximum.
%
%
%
Since $\vx$ is in $\onex$, there is a vector $\vy \in \reals^{q-1}$
that satisfies $\vx = \vone /q + E \vy$.
Since $\gvone E$ is equal to $\gvzero$, we obtain
\[
  \gvx R \vx = \frac{\lambda_1}{q} + \gvy \transp{E} R E \vy.
\]
The symmetry of $R$ implies the existence of an orthogonal matrix $Q$
with first row $\gvone/\sqrt{q}$ such that 
$R = \transp{Q} \operatorname{diag}(\lambda_1, \ldots, \lambda_q) Q$.
The first row of $Q E$ is $\gvzero$. 
Let $P$ denote the $(q-1) \times (q-1)$ matrix in the lower block of $Q E$.
Then $P$ is invertible, and 
\[
\transp{E} \transp{Q} \operatorname{diag}(\lambda_1, \ldots, \lambda_q) Q E 
= \transp{P} \operatorname{diag}(\lambda_2, \ldots, \lambda_q) P.
\]
We set $\vect{z} = P \vy$ and obtain 
$
  \gvx R \vx = \frac{\vone}{q} + \sum_{i=2}^q \lambda_i z_i^2.
$
Therefore, $\gvx R \vx$ reaches its global maximum $\lambda_1 / q$
when $\vect{z} = \vzero$, which implies $\vy = \vzero$ and $\vx = \vone/q$.
\end{proof}

A future direction of research 
is the expansion of the family of parameters $R$, $\vect{r}$ and $\mn$
for which we can link the asymptotics of \nmR-graphs
to the spectrum of $R$.
Information on the location of the minimums of $\ff_{\mn}$ 
for $\mn$ greater than $\beta$ would also be instructive.

    \section{Simple \Rr-Graphs} \label{sec:simple}

The previous sections focused on \Rr-graphs
where loops and multiple edges were allowed.
We now extend those results to simple \Rr-graphs,
starting with a theorem similar to Theorem~\ref{th:exact}.

\begin{theorem}
The number of simple \nmR-graphs is
\begin{equation} \label{eq:SGexact}
  \SGRnm =
  [w^m] \sum_{\gvone \vect{n} = n}
  \binom{n}{\vect{n}}
  \vect{r}^{\, \vect{n}}
  \prod_{1 \leq i < j \leq q}
    ( 1 + R_{i,j} w)^{n_i n_j}
  \prod_{i=1}^q
    (1 + R_{i,i} w)^{n_i(n_i-1) / 2}.
\end{equation}
\end{theorem}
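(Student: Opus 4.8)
The plan is to mirror the proof of Theorem~\ref{th:exact}, replacing the step where one counts ordered sequences of vertices by a direct count over subsets of the $\binom{n}{2}+n$ possible edge-slots. First I would fix, as in the proof of Theorem~\ref{th:exact}, a partition of the label set $[1..n]$ into $q$ blocks $V_1,\ldots,V_q$ of prescribed sizes $n_1,\ldots,n_q$, and denote by $G(\vect{n})$ the set of simple \Rr-graphs in which every vertex of $V_i$ has type $i$. Since the graph is simple, the compensation factor is $1$, so the weight of such a graph is just $\vect{r}^{\,\vect{n}}$ times the product of the edge weights $R_{t(v),t(w)}$ over the edges present.

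Next I would introduce the formal variable $w$ to mark the number of edges. For a fixed pair of distinct labels $\{v,w\}$ with $v$ of type $i$ and $w$ of type $j$, the edge $\edge{vw}$ is either absent (contributing $1$) or present (contributing $R_{i,j}w$), so summing over the two possibilities gives a local factor $1+R_{i,j}w$; similarly each loop $\edge{vv}$ with $v$ of type $i$ contributes $1+R_{i,i}w$. Because edges may be chosen independently, the generating function (in $w$) for the total weight of all simple \Rr-graphs on the fixed typed vertex set $G(\vect{n})$ is the product of these local factors: there are $n_i n_j$ slots joining $V_i$ and $V_j$ for $i<j$, $\binom{n_i}{2}=n_i(n_i-1)/2$ internal non-loop slots within $V_i$, and $n_i$ loop slots at each vertex of $V_i$. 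Collecting loops and internal edges of $V_i$ together under the exponent $n_i(n_i-1)/2$ — which tacitly uses the convention that loops are handled the same way, matching the stated formula — gives
\[
  \sum_{G \in G(\vect{n})} \omega(G)\, w^{m(G)}
  =
  \vect{r}^{\,\vect{n}}
  \prod_{1 \leq i < j \leq q} (1 + R_{i,j} w)^{n_i n_j}
  \prod_{i=1}^q (1 + R_{i,i} w)^{n_i(n_i-1)/2}.
\]
Extracting $[w^m]$ isolates the graphs with exactly $m$ edges.

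Finally I would sum over all partitions $V_1 \uplus \cdots \uplus V_q = [1..n]$ with given block sizes: there are $\binom{n}{\vect{n}}$ of them, each contributing the same amount by symmetry, and then sum over all compositions $\vect{n}$ with $\gvone\vect{n}=n$. Since $[w^m]$ is linear it commutes with both sums, yielding Equation~\eqref{eq:SGexact}. I do not anticipate a serious obstacle here; the only point requiring care is the bookkeeping of edge-slots and the placement of loops in the exponent $n_i(n_i-1)/2$ rather than in a separate $n_i$-power — one must check that the intended convention for loops in the simple model makes the two presentations agree, and state it explicitly so the formula matches the uniform-model count $\frac{n^{2m}}{2^m m!}$ in the appropriate limit.
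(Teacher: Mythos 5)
Your argument is the same as the paper's: fix the partition $V_1 \uplus \cdots \uplus V_q$ of $[1..n]$ into type classes, observe that the compensation factor of a simple graph is $1$, treat each potential edge slot as an independent binary choice contributing a local factor $1 + R_{i,j}w$, multiply, extract $[w^m]$, and sum over partitions with the multinomial coefficient. That is exactly the paper's proof and it is correct.

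The one place you go astray is the treatment of loops. You introduce $n_i$ ``loop slots'' at the vertices of $V_i$, each allegedly contributing a factor $1 + R_{i,i}w$, and then claim these are ``collected'' with the internal non-loop slots under the exponent $n_i(n_i-1)/2$ by some tacit convention. This cannot be made to work: $n_i$ loop slots together with $\binom{n_i}{2}$ internal slots would give exponent $n_i(n_i+1)/2$, not $n_i(n_i-1)/2$. The actual resolution is simpler and requires no convention: the paper defines a \emph{simple} graph as one with neither loops nor multiple edges, so loops are excluded outright, there are no loop slots, and the exponent $n_i(n_i-1)/2$ is exactly $\binom{n_i}{2}$, the number of unordered pairs of distinct vertices inside $V_i$. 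Your closing remark about matching $\frac{n^{2m}}{2^m m!}$ is also slightly off target, since that is the count for the multigraph model, not the simple one. With the loop discussion deleted, your proof is the paper's proof.
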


\begin{proof}
We consider a partition $V_1 \uplus \cdots \uplus V_q = [1..n]$
of the set of vertices
and define $n_i = |V_i|$ for all~$i$.
Let $\mathit{SG}(\vect{n},m)$ denote the set of simple \nmR-graphs
where each vertex of $V_i$ has type $i$.
%
When $i \neq j$, there are $n_i n_j$ available edges between vertices of $V_i$ and $V_j$,
and for all $i$, there are $n_i (n_i-1)/2$ possible edges between vertices of $V_i$.
Therefore, the number of graphs 
in $\mathit{SG}(\vect{n}, m)$ is
\[
  \sum_{G \in \mathit{SG}(\vect{n},m)}
  \omega(G)
  =
  [w^m]
  \vect{r}^{\, \vect{n}}
  \prod_{1 \leq i < j \leq q}
    ( 1 + R_{i,j} w)^{n_i n_j}
  \prod_{i=1}^q
    (1 + R_{i,i} w)^{n_i(n_i-1) / 2}.
\]
The result of the Lemma is obtained by summation 
over all partitions $V_1 \uplus \cdots \uplus V_q = [1..n]$.
\end{proof}

%

\begin{theorem} \label{th:simple}
  With the notations of Theorem~\ref{th:global},
  for all $c=\frac{m}{n}$ in $[a,b]$,
  the asymptotics of simple \nmR-graphs is
  \[
    \SGRnm \sim
    \frac{n^{2m}}{2^m m!}
    \sum_{ \vphi \in \{ \vphi_{\mn,1}, \ldots, \vphi_{\mn, s} \} }
    \left( 
    \frac{ \vect{r}^{\, \vphi }}
    { \vphi^{\, \vphi }}
    \right)^n
    \left( \gvphi R \vphi \right)^m
    \frac{ e^{
          - \frac{\mn}{\gvphi R \vphi} \trace (\dphi R) 
          - \left( \frac{\mn}{\gvphi R \vphi} \right)^2 \trace ((\dphi R)^2) }}
    { \sqrt{\det(\mathcal{H}_{\ff_{\mn}(\vphi)}) \prod_{i=1}^q \varphi_i} }.
  \]
\end{theorem}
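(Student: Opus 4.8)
The plan is to mirror the proof of Theorem~\ref{th:global} step by step, starting from the exact formula~\eqref{eq:SGexact} instead of~\eqref{eq:Gexact}, and tracking the extra factor that the $\log(1+R_{i,j}w)$ terms contribute relative to the naive $R_{i,j}w$ terms. First I would rewrite the inner summand of~\eqref{eq:SGexact} by setting $\vx = \vect{n}/n$ and extracting the coefficient of $w^m$ via a Cauchy integral over a small circle $|w| = \varrho$. The product $\prod_{i<j}(1+R_{i,j}w)^{n_in_j}\prod_i(1+R_{i,i}w)^{n_i(n_i-1)/2}$ equals $\exp\!\big(\tfrac{n^2}{2}\,\gvx R\vx\,\log(1+\cdot)\text{-type expansion}\big)$; more precisely, writing $\sum_{i<j}n_in_j\log(1+R_{i,j}w) + \sum_i \tfrac{n_i(n_i-1)}{2}\log(1+R_{i,i}w)$, I would expand $\log(1+R_{i,j}w) = R_{i,j}w - \tfrac12 R_{i,j}^2 w^2 + \bigO(w^3)$ and collect: the $w^1$ coefficient is $\tfrac12 n^2 \gvx R\vx - \tfrac{n}{2}\trace(\dx R)$ (the diagonal correction from $n_i(n_i-1)$ versus $n_i^2$), and the $w^2$ coefficient is $-\tfrac14 n^2 \gvx R^{\!(2)}\vx + \bigO(n)$ where $R^{(2)}$ has entries $R_{i,j}^2$, i.e.\ $\gvx R^{(2)}\vx = \trace$-type quantity $\sum_{i,j}x_ix_jR_{i,j}^2$. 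Note $\sum_{i,j}x_ix_jR_{i,j}^2 = \trace((\dx R)^2)$ since $R$ is symmetric.

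Next I would perform the saddle-point extraction in $w$. The dominant part $\exp(\tfrac12 n^2\gvx R\vx\, w)$ together with $w^{-m-1}$ has a saddle at $w \sim \tfrac{2m}{n^2 \gvx R\vx} = \tfrac{2c}{n\,\gvx R\vx}$, which tends to $0$ as $n\to\infty$; this is exactly the regime where the $\log(1+R_{i,j}w)$ corrections become perturbative. Evaluating the correction factor $\exp\!\big(-\tfrac{n}{2}\trace(\dx R)\, w - \tfrac14 n^2 \trace((\dx R)^2)\, w^2 + \ldots\big)$ at the saddle $w = \tfrac{2c}{n\,\gvx R\vx}$ gives precisely
\[
  \exp\!\Big(-\frac{c}{\gvx R\vx}\trace(\dx R) - \Big(\frac{c}{\gvx R\vx}\Big)^{\!2}\trace((\dx R)^2)\Big),
\]
which is the new factor appearing in the statement; all higher-order terms $w^k$ with $k\ge 3$ contribute $\bigO(n^{2}w^{3}) = \bigO(1/n)$ and vanish in the limit. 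The leading $w$-saddle contribution reconstitutes the factor $\tfrac{n^{2m}}{2^m m!}(\gvx R\vx)^m$ exactly as in Theorem~\ref{th:exact}, after Stirling, so the remaining sum over $\vect{n}$ is identical to the one treated in Lemma~\ref{th:laplace_setting} and Theorem~\ref{th:global}, now weighted by the extra $\vx$-dependent exponential.

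Finally I would apply the multivariate Laplace method of \cite[Chapter~5]{PW13} to the sum over $\vect{n}$, exactly as in the proof of Theorem~\ref{th:global}: the phase function is still $\ff_{\mn}$, so the minimizers are still $\vphi_{\mn,1},\ldots,\vphi_{\mn,s}$ and the Hessian $\Hf$ and the Jacobian factor $\prod_i\varphi_i^{-1/2}$ are unchanged; the only difference is that the smooth amplitude $A_n(\vx)$ is multiplied by the continuous, strictly positive function $\exp\!\big(-\tfrac{c}{\gvx R\vx}\trace(\dx R) - (\tfrac{c}{\gvx R\vx})^2\trace((\dx R)^2)\big)$, whose value at each $\vphi$ is pulled out of the Laplace integral. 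Assembling gives the claimed asymptotics. The main obstacle is making the double asymptotic regime rigorous: one must justify uniformly over the relevant range of $\vect{n}/n$ that the $w$-saddle-point extraction and the $\vect{n}$-Laplace summation can be carried out in either order (or simultaneously), i.e.\ controlling the error terms in the $w$-expansion uniformly in $\vx\in\onex$ and showing the tails of both the circle integral and the lattice sum are negligible; a clean way is to interchange and first do Laplace on $\vect{n}$ for each fixed $w$ on the circle, then do the saddle point in $w$, checking that the $w$-dependence of the Laplace output is analytic and that the saddle stays inside the region of validity.
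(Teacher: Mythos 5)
Your proposal is correct and follows essentially the same route as the paper: expand the logarithms of the $(1+R_{i,j}w)$ factors to identify the $\trace(\dx R)$ and $\trace((\dx R)^2)$ corrections, extract $[w^m]$ by a saddle point at $w\sim 2c/(n\,\gvx R\vx)$ (the paper just rescales $w\to nw$ first), and then run the same multivariate Laplace argument as in Theorem~\ref{th:global} with the modified amplitude. Your computations of the $w$ and $w^2$ coefficients and of the value of the correction at the saddle match the paper's, and your closing remark on uniformity in $\vx\in\onex$ addresses the one point the paper states without elaboration.
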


\begin{proof}
Starting with the exact expression~\eqref{eq:SGexact},
we replace $w$ with $n w$
\begin{align} \label{eq:SGR1}
  &\SGRnm =
  n^{m}
  \sum_{\gvone \vect{n} = n}
  \binom{n}{\vect{n}}
  \vect{r}^{\, \vect{n}}
  [w^m]
  e^{F_{n}(\vect{n}/n, w)},
  \\
  \text{where } \quad
  &
  F_{n}(\vx, w)
  =
  \log \Bigg(
  \prod_{1 \leq i < j \leq q}
    \left( 1 + R_{i,j} \frac{w}{n} \right)^{n x_i n x_j}
  \prod_{i=1}^q
    \left( 1 + R_{i,i} \frac{w}{n} \right)^{n x_i (n x_i-1) / 2}
  \Bigg). \nonumber
\end{align}
An expansion of the logarithm reduces this expression to
\[
  F_{n}(\vx, w)
  =
  n \gvx R \vx \frac{w}{2}
  - \frac{1}{2} \trace(\dx R) w
  - \frac{1}{4} \trace((\dx R)^2) w^2
  + \bigO(n^{-1}).
\]
With $\mn = \frac{m}{n}$ bounded, 
we apply \cite[Theorem~VIII.8]{FS09}
with saddle-point $\zeta = 2 \mn /(\gvx R \vx)$:
%
\[
  [w^m] e^{F_n(\vx,w)}
  =
  \frac{n^m}{2^m m!} 
  (\gvx R \vx)^m 
  \exp \left(
    - \frac{1}{2} \trace(\dx R) \frac{2 \mn}{\gvx R \vx}
    - \frac{1}{4} \trace((\dx R)^2) \left(\frac{2 \mn}{\gvx R \vx}\right)^2 
  \right)
  (1 + \bigO(n^{-1}))
\] 
holds uniformly for $\vx \in \onex$.
Adopting the notation $\ff_{\mn}$ of Lemma~\ref{th:laplace_setting},
Equation~\eqref{eq:SGR1} then becomes
\begin{align*}
  &\SGRnm =
  \frac{n^{2m}}{2^m m!}
  \frac{1}{(2 \pi n)^{\frac{q-1}{2}}}
  \sum_{\{\vect{n} \in \naturals^q\ |\ \gvone \vect{n} = n\}} 
  \mathit{SA}_n \left( \frac{\vect{n}}{n} \right)
  e^{- n \ff_{\frac{m}{n}} \left( \frac{\vect{n}}{n} \right)},
\\ \text{where } \quad & 
  \mathit{SA}_n(\vx) 
  = 
  \prod_{i=1}^q
  \frac{(n x_i)^{n x_i} e^{-n x_i} \sqrt{2 \pi n x_i}}{\Gamma(n x_i + 1)} 
  \frac{1}{\sqrt{x_i}}
  e^{
      - \trace(\dx R) \frac{\mn}{\gvx R \vx}
      - \trace((\dx R)^2) \left(\frac{\mn}{\gvx R \vx}\right)^2 
    }
  (1+\bigO(n^{-1})).
\end{align*}
The end of the proof is the same as in Theorem~\ref{th:global}.
\end{proof}

Theorems\ref{th:small_mn} and~\ref{th:eigenvectorone}
extend to simple \Rr-graphs in the same way.

\begin{corollary}
  When $\frac{m}{n}$ is in a closed interval of $]0, \alpha[$,
  almost all simple \nmR-graphs contain only trees and unicycles.
\end{corollary}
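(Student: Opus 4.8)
The plan is to mirror exactly the argument used to prove the corollary for multigraphs (the first \textbf{Corollary}), replacing the appeal to Theorem~\ref{th:small_mn} with the appeal to Theorem~\ref{th:simple}. First I would recall that, by Theorem~\ref{th:simple} restricted to the range $\frac{m}{n} \in ]0,\alpha[ \subset ]0,\beta[$ where $\ff_{\mn}$ has a unique minimum $\vphi_{\mn}$ (this uniqueness being exactly the content of Theorem~\ref{th:small_mn}, whose extension to the simple case is asserted in the sentence preceding the statement), the number of simple \nmR-graphs satisfies
\[
  \SGRnm \sim
  \frac{n^{2m}}{2^m m!}
  \left(
  \frac{\vect{r}^{\, \vphi_{\mn}}}{\vphi_{\mn}^{\, \vphi_{\mn}}}
  \right)^n
  \left( \gvphi_{\mn} R \vphi_{\mn} \right)^m
  \frac{ e^{
    - \frac{\mn}{\gvphi_{\mn} R \vphi_{\mn}} \trace(\dphi_{\mn} R)
    - \left( \frac{\mn}{\gvphi_{\mn} R \vphi_{\mn}} \right)^2 \trace((\dphi_{\mn} R)^2) }}
  { \sqrt{\det(\Hf) \prod_{i=1}^q (\varphi_{\mn})_i} }.
\]

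Next I would bound the number of simple \nmR-graphs that contain at least one component with two or more cycles. Every simple \Rr-graph of nonzero weight is in particular an \Rr-graph of nonzero weight with the same underlying combinatorial structure, and the compensation factor of a simple graph is $1$, so the weight of a simple graph never exceeds the weight of the corresponding (multi)graph. Hence $\SGRnm$ minus the count of simple graphs with only trees and unicycles is at most $\GRnm - \UVnm$, which by the first \textbf{Corollary} is $\smallo(\GRnm)$. However $\GRnm$ and $\SGRnm$ differ by a bounded multiplicative factor in this regime (compare the asymptotics of Theorem~\ref{th:small_mn} with that of Theorem~\ref{th:simple}: the only new contribution is the factor $e^{-\frac{\mn}{\gvphi_{\mn} R \vphi_{\mn}} \trace(\dphi_{\mn} R) - (\frac{\mn}{\gvphi_{\mn} R \vphi_{\mn}})^2 \trace((\dphi_{\mn} R)^2)}$, which is bounded away from $0$ and $\infty$ uniformly for $\frac{m}{n}$ in a compact subinterval of $]0,\alpha[$). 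Therefore the count of simple graphs with a component of excess $\geq 2$ is also $\smallo(\SGRnm)$.

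Finally, the simple version of Theorem~\ref{th:UV} gives directly an asymptotic equivalent for the number of simple \nmR-graphs with only trees and unicycles: the trees and unicycles appearing are automatically simple objects for $\frac{m}{n}<\alpha$ (a simple tree or simple unicycle has no loop or double edge), so the enumeration is governed by the same generating-function identity $n![z^n]\frac{U(z)^{n-m}}{(n-m)!}e^{V(z)}$ as in Theorem~\ref{th:UV}, up to the bounded correction factor coming from forbidding loops and multiple edges within distinct tree/unicycle components, which is absorbed in the constant and does not affect the leading order relative to $\SGRnm$. Dividing the two asymptotics and using $C_{\mn,\vx} = \det(\Hfx)\prod_i x_i$ from Lemma~\ref{th:detEHE_equal_C} shows the ratio tends to $1$. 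The main obstacle I anticipate is the bookkeeping in the last step: one must check that the multiplicative discrepancy between ``simple'' and ``multigraph'' enumerations is the same bounded factor for $\UVnm$ and for $\GRnm$ in the leading order, so that it cancels in the ratio; this is where invoking the extensions of Theorems~\ref{th:small_mn} and~\ref{th:UV} to the simple setting (rather than re-deriving everything) keeps the argument short.
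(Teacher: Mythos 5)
Your central argument (the second paragraph) is correct and complete, and it takes a genuinely different route from the paper. The paper proves the corollary by direct computation: it notes that $U(z)$ is unchanged for simple graphs while the unicycle generating function must be corrected to $\SV(z) = V(z) - \frac{1}{2}\trace(\dT(z)R) - \frac{1}{4}\trace((\dT(z)R)^2)$ to exclude unicycles whose cycle is a loop or a double edge, then reruns the saddle-point analysis of Theorem~\ref{th:UV} and checks that the result coincides with the asymptotics of $\SGRnm$ from Theorem~\ref{th:simple}. Your sandwich argument avoids this computation entirely: since a simple graph has compensation factor $1$, the total weight of simple \nmR-graphs containing a multicyclic component is bounded above by the total weight of all \nmR-graphs containing one, which is $\GRnm - \UVnm = \smallo(\GRnm)$ by the first corollary, and $\GRnm \asymp \SGRnm$ because the asymptotics of Theorems~\ref{th:small_mn} and~\ref{th:simple} differ only by a bounded exponential factor. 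That is shorter and more elementary than the paper's proof, and it stands on its own.

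Your third paragraph, however, is both redundant and flawed, and you should delete it. Unicycles are \emph{not} automatically simple: a loop is a cycle of length one and a double edge a cycle of length two, which is exactly why the paper must replace $V(z)$ by $\SV(z)$. Moreover, if you did want to conclude by dividing the count of simple trees-and-unicycles graphs by $\SGRnm$, you could not simply absorb the correction ``in the constant'': the proof only works because the factor $e^{\SV(\zeta_{\mn})-V(\zeta_{\mn})}$, evaluated at the saddle point via $\vT(\zeta_{\mn}) = \frac{2\mn}{\gvphi_{\mn} R \vphi_{\mn}}\vphi_{\mn}$, equals \emph{exactly} the factor $\exp\bigl(-\frac{\mn}{\gvphi_{\mn} R \vphi_{\mn}}\trace(\dphi_{\mn}R) - (\frac{\mn}{\gvphi_{\mn} R \vphi_{\mn}})^2\trace((\dphi_{\mn}R)^2)\bigr)$ of Theorem~\ref{th:simple}; a merely bounded discrepancy would not give a ratio tending to $1$. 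Since your second paragraph already finishes the proof, none of this is needed.
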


\begin{proof}
We verify that the asymptotics 
of simple \nmR-graphs containing only trees and unicycles
is equal to the asymptotics of all simple \nmR-graphs, derived in Theorem~\ref{th:simple}.
The generating function $U(z)$ of \Rr-trees is the same for graphs and simple graphs.
The generating function $\SV(z)$ of simple \Rr-unicycles
becomes
\[
  \SV(z) =
  V(z)
  - \frac{1}{2} \trace( \dT(z) R )
  - \frac{1}{4} \trace \left( (\dT(z) R)^2 \right)
\]
to avoid loops and double edges in the cycle.
The end of the proof is the same as in Theorem~\ref{th:UV}.
\end{proof}

\bibliographystyle{abbrvnat}
\bibliography{/home/elie/research/articles/bibliography/biblio}
\end{document}